\theoremstyle{plain}
\newtheorem{theorem}{Theorem}[section]
\newtheorem{lemma}[theorem]{Lemma}
\newtheorem{corollary}[theorem]{Corollary}
\theoremstyle{definition}
\newtheorem{mydef}[theorem]{Definition}
\providecommand{\Irr}{\textnormal{Irr}}
\providecommand{\cd}{\textnormal{cd}}
\begin{document}

\title{}
\author{}

\begin{abstract}
We define two different simplicial complexes, the common divisor simplicial complex and the prime divisor simplicial complex, from a set of integers, and explore their similarities. We will define a map between the two simplicial complexes, and use this map to show that for any set of integers, the fundamental groups of the resulting simplicial complexes are isomorphic. 
\\{\scshape Keywords:} Abstract Algebra, Topology, Algebraic Topology, Fundamental Group, Simplicial Complex, simplex
\end{abstract}

\begin{center}
{\large\scshape Fundamental Groups of Simplicial Complexes}\par\bigskip

E. Wheeler\\[5pt]
Carthage College\\[5pt]
ewheeler2$@$carthage.edu\\[5pt]
May 30, 2016\\[5pt]
\maketitle
\end{center}

\section{Introduction}
Suppose $G$ is a finite group.  Historically, much has been deduced about the structure of $G$ when only given information about its irreducible characters. Notationally, we write $\Irr(G)$ for the set of irreducible characters of a group $G$ and we write $\cd(G) = \{\chi(1) \, \, | \, \, \chi \in \Irr(G)\}$.  An enormous amount of information can be deduced about the group $G$ when only given the set $\cd(G)$.  For a comprehensive overview, see \cite{lewis2008overview}.  

Two useful ways of visualizing the members of the set $\cd(G)$ have frequently been employed in this area of research.  The first is called the \emph{character degree graph} of $\cd(G)$, denoted by $\Gamma(G)$.  The vertices of this graph are the members of the set $\cd(G) \setminus \{1\}$, and there is an edge connecting two vertices if the corresponding irreducible character degrees have a nontrivial common divisor.  The second is the \emph{prime vertex graph}, denoted $\Delta(G)$, which has the primes dividing some member of $\cd(G)$ as its vertices, and there is an edge between two vertices if there is a member of $\cd(G)$ divisible by the two associated primes.  More recent research questions have involved what can be said about the structure of $G$ given only either $\Gamma(G)$ or $\Delta(G)$; \cite{lewis2008overview} also contains a summary of these types of results.

It has long been understood that there is an intimate relationship between the graphs $\Gamma(G)$ and $\Delta(G)$.  For example, it is known (\cite{lewis2008overview}) that $\Gamma(G)$ is connected if and only if $\Delta(G)$ is connected, and that the distance between two vertices in $\Gamma(G)$ is closely related to the distance between ``corresponding'' vertices in $\Delta(G)$.  The relationship between $\Gamma(G)$ and $\Delta(G)$ has given researchers the fluidity to obtain results pertaining to only one of these graphs and use the proper correspondence to obtain a result about the other.

Still more current research \cite{jensen2015character} suggests moving away from studying only the structure of the common divisor graph of $G$ and the prime divisor graph of $G$ toward the study of the \emph{common divisor simplicial complex} of $G$, denoted $\mathcal{G}(G)$, and the \emph{prime vertex simplicial complex} of $G$, henceforth $\mathcal{D}(G)$.  In \cite{jensen2015character}, the author works primarily with the simplicial complex $\mathcal{G}(G)$ obtaining results about the fundamental group.  It is unclear in this work if there is any analogous result regarding the fundamental group of $\mathcal{D}(G)$ as no more general correspondence between $\mathcal{G}(G)$ and $\mathcal{D}(G)$ is known.  

Although the connection between $\Gamma(G)$ and $\Delta(G)$ is noticeable and documented, the nature of the relationship between the two graphs is imprecise.  Moreover, as work with $\mathcal{G}(G)$ and $\mathcal{D}(G)$ is just emerging, there is no current research justifying a more general relationship between these two structures.  In this paper, we establish the precise nature of the relationship between $\mathcal{G}(G)$ and $\mathcal{D}(G)$ by creating a specific map $\eta$ between the two simplicial complexes.  The map $\eta$, when restricted to $\Gamma(G)$ and $\Delta(G)$, makes precise the relationship so frequently used in the works summarized in \cite{lewis2008overview}.  We then proceed to establish that if $G$ is a finite group with $\mathcal{G}(G)$ connected, then $\eta$ induces a map $\eta_*$ on the fundamental groups of $\mathcal{G}(G)$ and $\mathcal{D}(G)$, denoted $\pi_1(\mathcal{G}(G))$ and $\pi_1(\mathcal{D}(G))$, respectively.  Our main result is the following.

\begin{theorem}
Suppose $G$ is a finite group with $\mathcal{G}(G)$ connected.  The induced map $\eta_*$ from $\pi_1(\mathcal{G}(G))$ to $\pi_1(\mathcal{D}(G))$ is a group isomorphism. 
\end{theorem}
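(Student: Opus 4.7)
The plan is to work with the edge-path group presentation of $\pi_1$ of a simplicial complex: $\pi_1(X)$ is generated by edge-loops modulo the relations coming from backtrack cancellations along a single edge and from triangular fills across 2-simplices. Having constructed $\eta$ and established by functoriality that $\eta_*$ is a well-defined group homomorphism, it remains only to verify bijectivity. I note that the underlying situation is reminiscent of Dowker's theorem for the nerves of the bipartite incidence between $\cd(G) \setminus \{1\}$ and the primes dividing its elements, but the goal here is to prove directly that the specific map $\eta$ constructed earlier realizes the isomorphism.

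For surjectivity, given an arbitrary edge-loop $p_0 \to p_1 \to \cdots \to p_k = p_0$ in $\mathcal{D}(G)$, the very definition of $\mathcal{D}(G)$ provides, for each edge $\{p_i, p_{i+1}\}$, a degree $n_i \in \cd(G)$ divisible by both $p_i$ and $p_{i+1}$. Consecutive choices $n_i$ and $n_{i+1}$ share $p_{i+1}$ as a common prime divisor, making $\{n_i, n_{i+1}\}$ an edge in $\mathcal{G}(G)$, and concatenation yields an edge-loop $\beta$ in $\mathcal{G}(G)$. One then checks that $[\eta(\beta)]$ coincides with the original class in $\pi_1(\mathcal{D}(G))$; the required triangular fills to verify this identity are supplied by the 2-simplices of $\mathcal{D}(G)$ arising from triples of primes sharing a common degree-multiple.

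For injectivity, which I expect to be the main obstacle, suppose $[\gamma] \in \pi_1(\mathcal{G}(G))$ satisfies $\eta_*([\gamma]) = 1$. Then $\eta(\gamma)$ reduces to the trivial loop via a finite sequence of elementary edge-homotopies in $\mathcal{D}(G)$, each either a backtrack cancellation or a triangle-fill across some 2-simplex $\{p, q, r\}\subseteq \mathcal{D}(G)$. The essential task is to lift each such elementary move to an edge-homotopy in $\mathcal{G}(G)$. A triangle-fill in $\mathcal{D}(G)$ encodes three primes $p, q, r$ with a common multiple $m \in \cd(G)$, and this $m$ supplies a vertex of $\mathcal{G}(G)$ sharing each of the three primes with the appropriate degree-vertices realizing $p, q, r$ under $\eta$, producing the 2-simplices of $\mathcal{G}(G)$ needed for the lift. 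A careful induction on the number of elementary moves, with attention to how the lifted path tracks the bipartite incidence between degrees and primes, then completes the proof; the coherence of these lifts across consecutive moves is the technically demanding piece of the argument, and it is where the hypothesis that $\mathcal{G}(G)$ is connected (ensuring a well-defined basepoint structure on both sides) will be used.
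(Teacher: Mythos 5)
Your proposal is correct and follows essentially the same route as the paper: the paper also works with the edge-path group, constructs the explicit lift you describe (choosing for each edge $\langle p_i,p_{i+1}\rangle$ of $\mathcal{D}(G)$ a degree divisible by both primes) to get surjectivity, and proves injectivity by lifting each simple equivalence (backtrack or triangle-fill) of $\mathcal{D}(G)$ back to $\mathcal{G}(G)$ using exactly the common-multiple $2$-simplices you identify. The ``coherence of lifts'' step you flag as the technical core is handled in the paper by observing that the prime $\omega_i$ chosen for the edge $\langle v_i,v_{i+1}\rangle$ divides $v_i$, $v_{i+1}$, and both lifted vertices adjacent to it, so all four lie in a common simplex of $\mathcal{G}(G)$, whence the lift of $\eta_*(\gamma)$ is homotopic to $\gamma$ itself.
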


This correspondence allows us to derive analogous results to those found in  \cite{jensen2015character} for $\mathcal{D}(G)$, and it opens the door for further analogies to be drawn regarding all works involving $\pi_1(\mathcal{G}(G))$ or $\pi_1(\mathcal{D}(G))$.  Having the more general map $\eta$ between $\mathcal{G}(G)$ and $\mathcal{D}(G)$ allows us the ability to attempt to translate all kinds of results established on either $\mathcal{G}(G)$ or $\mathcal{D}(G)$ to the other simplicial complex in a routine way. 

For the majority of the paper, results are stated more generally for the common divisor simplicial complex and prime divisor simplicial complex of a set of positive integers $X$. In Section \ref{prelims}, we provide the necessary topological definitions for our work. In Section \ref{correspondence}, we define the map $\eta$, establishing a correspondence between $\mathcal{G}(X)$ and $\mathcal{D}(X)$. In Section \ref{inducedmap}, we show that $\eta$ induces a map $\eta_*$ on edge-paths in $\mathcal{G}(X)$ and $\mathcal{D}(X)$, and show that this induced map is an isomorphism between $\pi_1(\mathcal{G}(X))$ and $\pi_1(\mathcal{D}(X))$. In Section \ref{applications}, we apply the general theory from the previous sections to group theory. In Section \ref{research}, we suggest some directions for future research on this topic. And in Section \ref{example}, we have a useful example of simplices and simplicial complexes. The author would like to thank Sara Jensen for her help in writing the introduction to this paper.  

\section{Preliminary Definitions}\label{prelims}
In this Section, we begin with a few preliminary definitions necessary for the rest of this paper. 
\begin{mydef}
A \textbf{simplex} is the generalization of a triangle to arbitrary dimensions. An $n$-simplex is the convex hull of $n+1$ vertices in $\mathbb{R}^n$.
\end{mydef}
A 0-simplex is a point, a 1-simplex is a line, a 2-simplex is a triangle, and a 3-simplex is a tetrahedron. Simplices can also exist in more than three dimensions, where they take the shape of the convex hull of their vertices. An $n$-simplex can equivalently be described as a simplex of dimension $n$. Any $n$-simplex, because it has $n+1$ vertices, can be embedded in $\mathbb{R}^{n+1}$ by assigning each vertex an axis and giving it a coordinate of 1 on that axis and 0 on every other axis. 
\begin{mydef}
A \textbf{face} of a simplex is any subset of the vertices of that simplex. 
\end{mydef}
\begin{mydef}
An (abstract) \textbf{simplicial complex} $\mathcal{K}$ is a pair $(\mathcal{V}, \mathcal{S})$ where $\mathcal{V}$ is a finite set of elements called vertices and $\mathcal{S}$ is a set of nonempty subsets of $\mathcal{V}$ called simplices such that all singleton subsets of $\mathcal{V}$ belong to $\mathcal{S}$ and if $\sigma\in\mathcal{S}$ and $\sigma'\subseteq\sigma$ then $\sigma'\in\mathcal{S}$.
\end{mydef}
A simplicial complex can be embedded in $\mathbb{R}^n$ where $n$ is the number of vertices in the simplicial complex. In this way we define the fundamental group, using the standard topology on $\mathbb{R}^n$. The definition and properties of the fundamental group can be found in \cite{munkres2000}. All simplices are simply-connected, so the fundamental group of an individual simplex is trivial. 

Now let $\mathit{X}$ be a set of positive integers, and let $\mathit{X}^* = \mathit{X}\setminus\{1\}$. We will define two simplicial complexes from $X^*$. 
\begin{mydef}
The $\textbf{common divisor simplicial complex}$ of $X$, which we will denote $\mathcal{G}(X)$, has as its vertices the elements of $\mathit{X}^*$. Given a set of $n+1$ vertices $V\subseteq X^*$, we form an $n$-simplex out of $V$ if $\gcd(V)>1$. 
\end{mydef}
\begin{mydef}
The $\textbf{prime divisor simplicial complex}$ of $X$, which we will denote $\mathcal{D}(X)$, has as its vertices the set of primes dividing at least one member of $X^*$. If $P$ is the set of primes dividing some member of $X^*$, and $Q\subseteq P$ consists of $n+1$ primes, we form an $n$-simplex out of $Q$ if there exists an element $v$ of $\mathit{X}$ such that every element of $Q$ divides $v$. 
\end{mydef}

For an example of the two simplicial complexes, see Section \ref{example}.

Because every vertex in $\mathcal{G}(X)$ has a corresponding integer from $X$, we will use the phrase ``divides a vertex'' to mean ``divides the integer corresponding to the vertex.''

Looking at examples of both simplicial complexes, we see that there is a similarity of structure between $\mathcal{G}(X)$ and $\mathcal{D}(X)$. The two are not the same, but for any given set of integers $X$ they always seem to have the same number of loops. We will show that the two simplicial complexes have isomorphic fundamental groups. To do this, we will define a map from $\mathcal{G}(X)$ to $\mathcal{D}(X)$ that induces a map on the corresponding fundamental groups. 

\section{A Correspondence between $\mathcal{G}(X)$ and $\mathcal{D}(X)$}\label{correspondence}

We define a function that we will use to construct a map between $\mathcal{G}(X)$ and $\mathcal{D}(X)$. Let $v$ be an element of $\mathit{X}$. By the Fundamental Theorem of Arithmetic, $v$ has a unique prime factorization. Define $\pi(v)$ to be the set of distinct primes $p$ such that $p$ divides $v$, and note that $|\pi(v)|$ is the number of distinct primes in the prime factorization of $v$, not including multiplicity. That is, if $p^2|v$, this term only contributes 1 to $|\pi(v)|$. For example, let $v:=2^2\cdot5$. Then $|\pi(v)|=2$, one for the two 2 factors and one for the 5 factor. Given a set $\sigma$ of integers, we define $\pi(\sigma)$ to be $\pi(\gcd(\sigma))$.

We are ready to define our map, which we will denote by $\eta$. 

\begin{mydef}
Given $\mathit{X}$ a set of integers, let the domain of $\eta$ be the set of simplices in $\mathcal{G}(X)$, let the codomain be the set of simplices in $\mathcal{D}(X)$, and let $\sigma=\{v_1,v_2,\ldots,v_{n+1}\}$ be an $n$-simplex in $\mathcal{G}(X)$. Define $\eta(\sigma)$ to be the simplex in $\mathcal{D}(X)$ with the members of $\pi(\sigma)$ as vertices.  
\end{mydef}

Note that for all $\sigma\in\mathcal{G}(X)$, $\eta(\sigma)$ has dimension $|\pi(\sigma)|-1$.

The map $\eta$ is not an injection for all sets. For example, let $\mathit{X}:=\{2, 4, 8\}$. Then $\mathcal{D}(X)$ consists of a single point, $2$, onto which all three vertices, all three 1-simplices, and the 2-simplex of $\mathcal{G}(X)$ are mapped. It is also possible to have a simplex in $\mathcal{G}(X)$ map onto a simplex in $\mathcal{D}(X)$, and another simplex in $\mathcal{G}(X)$ map onto a face of the same simplex in $\mathcal{D}(X)$. 

The map $\eta$ is not a surjection for all sets, but it does have some nice properties similar to a surjection. Let $p$ be a vertex in $\mathcal{D}(X)$. Then there exists $v\in\mathit{X}$ such that $p|v$. We see that $\eta(v)$ is the $(|\pi(v)|-1)$-simplex with $p$ as one of its vertices in $\mathcal{D}(X)$. This means that not every vertex is the image of a simplex, but every vertex is a face of the image of a simplex. Likewise let $\sigma$ be any $n$-simplex in $\mathcal{D}(X)$. This means that there exists $v\in\mathit{X}$ and $n+1$ primes $p_1,p_2,\ldots,p_{n+1}$ such that $p_1p_2\ldots p_{n+1}|v$, from which we conclude that $\eta(v)$ is either $\sigma$ or a simplex with $\sigma$ as one of its faces. We see that for all primes $p\in P$ there exists some simplex $\sigma\in\mathcal{D}(X)$ with $p$ a vertex of $\sigma$ such that $\sigma = \eta(\tau)$, where $\tau$ is a simplex in $\mathcal{G}(X)$.

For example, let $X=\{30\}$, so the set of primes $P$ which divide elements of $X$ is $\{2,3,5\}$. Now there is no element of $\mathcal{G}(X)$ which maps to the vertex $2$ in $\mathcal{D}(X)$, but $\eta(30)$ is the $2$-simplex between $2$, $3$, and $5$ which contains the vertex $2$. 

The following result appears as Lemma 3.1 in \cite{lewis2008overview}.

\begin{theorem}\label{connectedness}
For all sets of integers $X$, $\mathcal{G}(X)$ is connected if and only $\mathcal{D}(X)$ is connected. 
\end{theorem}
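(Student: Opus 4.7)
The plan is to reduce to the $1$-skeletons and translate edge-paths from one complex to the other using $\eta$ together with the ``weak surjectivity'' observation made earlier: every prime $p \in P$ is a vertex of $\eta(v)$ for some $v \in X^*$. Recall that a simplicial complex is connected if and only if its $1$-skeleton (viewed as a graph) is connected, so in both directions it suffices to produce a graph path joining any two prescribed vertices.

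For the forward direction, assume $\mathcal{G}(X)$ is connected and pick primes $p, q \in P$. Choose $v, w \in X^*$ with $p \in \pi(v)$ and $q \in \pi(w)$, and take an edge-path $v = v_0, v_1, \ldots, v_k = w$ in $\mathcal{G}(X)$. Each consecutive pair satisfies $\gcd(v_{i-1}, v_i) > 1$, so $\pi(v_{i-1}) \cap \pi(v_i)$ is nonempty; pick $r_i$ in this intersection. Any two primes in $\pi(v_i)$ both divide $v_i$ and are therefore joined by an edge in $\mathcal{D}(X)$ --- this is exactly the statement that $\eta(\{v_i\})$ is a simplex of $\mathcal{D}(X)$. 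In particular, $r_i$ and $r_{i+1}$ are adjacent in $\mathcal{D}(X)$ (both lie in $\pi(v_i)$), and similarly $p, r_1$ and $r_k, q$ are adjacent. Concatenating yields the desired path $p, r_1, \ldots, r_k, q$ in $\mathcal{D}(X)$, collapsing any trivial steps arising from coincidences.

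For the reverse direction, assume $\mathcal{D}(X)$ is connected and pick $v, w \in X^*$. Choose primes $p \in \pi(v)$, $q \in \pi(w)$, and take an edge-path $p = p_0, p_1, \ldots, p_k = q$ in $\mathcal{D}(X)$. Each edge $\{p_{i-1}, p_i\}$ comes from some $u_i \in X$ divisible by both endpoints; since these are two distinct primes, $u_i \geq p_{i-1} p_i > 1$, so $u_i \in X^*$. Consecutive witnesses $u_i, u_{i+1}$ share the prime $p_i$ and are therefore adjacent in $\mathcal{G}(X)$, while $v, u_1$ share $p_0 = p$ and $u_k, w$ share $p_k = q$, giving the desired path from $v$ to $w$.

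I do not anticipate a serious obstacle here: the argument is entirely structural, essentially a dictionary between ``edge in $\mathcal{G}(X)$'' (common prime divisor) and ``edge in $\mathcal{D}(X)$'' (common multiple in $X$). The only care required is in handling degenerate cases when some of the intermediate primes or integers coincide --- so that a nominal step is actually trivial --- or when $k = 0$, in which case $p = q$ or $v = w$ and we take the constant path. These are absorbed by standard path-concatenation with repeated vertices deleted.
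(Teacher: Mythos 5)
Your argument is correct. Note, however, that the paper does not actually prove this statement: it is quoted as Lemma 3.1 of the Lewis overview article and used as a black box, so there is no in-paper proof to compare against. What you have written is the standard self-contained argument (and almost certainly the one in the cited source): reduce connectedness of each complex to connectedness of its $1$-skeleton, then translate a path in one graph into a path in the other by choosing, for each edge of $\mathcal{G}(X)$, a witnessing common prime, and for each edge of $\mathcal{D}(X)$, a witnessing element of $X$ divisible by both primes. Both directions are handled cleanly; you correctly observe that a witness $u_i$ for an edge of $\mathcal{D}(X)$ is divisible by two distinct primes and hence lies in $X^*$, and that the endpoints $p, r_1$ (resp.\ $v, u_1$) are joined because both lie in $\pi(v)$ (resp.\ are divisible by $p$). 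The degenerate cases (coinciding intermediate vertices, $k=0$) are correctly dispatched by deleting repeated vertices. The only thing worth making fully explicit, if this were to stand as a proof in the paper, is the preliminary fact that a simplicial complex is connected iff its $1$-skeleton is connected as a graph, which you state and which is standard.
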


From now on we will assume that our simplicial complexes are connected.

\begin{mydef}
The \textbf{2-skeleton} of a simplicial complex $\mathcal{K}$, denoted $\mathcal{K}^2$, is the pair $(\mathcal{V}, \mathcal{S}')$, where $\mathcal{S}'=\{\sigma\in\mathcal{S}:\lvert\sigma\rvert\leq 3\}$.
\end{mydef}
The 2-skeleton $\mathcal{K}^2$ is simply the union of all the simplices of $\mathcal{K}$ of dimension 2 or fewer. Given a simplicial complex $\mathcal{K}$, $\mathcal{K}^2$ is a new simplicial complex similar to $\mathcal{K}$ but with all 3-simplices and higher removed. The 2-dimensional faces of the higher dimensional simplices, which were removed, remain. 

Using the concept of a 2-skeleton, we will discuss some nice properties of the fundamental group of a simplicial complex. We will use the following result, which appears as Proposition 1.26 in \cite{hatcher2002algebraic}.

\begin{theorem}\label{skel}
The fundamental group of a simplicial complex depends only on its 2-skeleton. 
\end{theorem}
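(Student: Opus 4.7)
The plan is to prove the precise statement that the inclusion $i:\mathcal{K}^2\hookrightarrow\mathcal{K}$ induces an isomorphism $i_*:\pi_1(\mathcal{K}^2)\to\pi_1(\mathcal{K})$. I would proceed by induction on the dimension $n\geq 3$, showing that the fundamental group is unchanged when one passes from the $(n-1)$-skeleton $\mathcal{K}^{n-1}$ to $\mathcal{K}^n$, the space obtained by gluing in all $n$-simplices along their boundaries. Since $\mathcal{K}$ is finite, only finitely many inductive steps are needed to pass from $\mathcal{K}^2$ to $\mathcal{K}$.

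For the inductive step I would apply van Kampen's theorem to the following cover of $\mathcal{K}^n$. For each $n$-simplex $\sigma$, let $b_\sigma$ denote its barycenter, and let $U$ be the open set obtained from $\mathcal{K}^n$ by deleting every barycenter $b_\sigma$. A straight-line deformation retraction inside each $\sigma$ collapses $U$ onto $\mathcal{K}^{n-1}$, so $\pi_1(U)\cong\pi_1(\mathcal{K}^{n-1})$. Let $V$ be the open set obtained by removing $\mathcal{K}^{n-1}$, which is the disjoint union of the open $n$-simplices and hence a disjoint union of open $n$-balls; then $U\cap V$ is a disjoint union of punctured open $n$-balls, each homotopy equivalent to $S^{n-1}$. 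Because $n\geq 3$, both $V$ and $U\cap V$ have trivial $\pi_1$ on each connected component. Van Kampen's theorem then gives $\pi_1(\mathcal{K}^n)\cong\pi_1(\mathcal{K}^{n-1})$ via the inclusion, and iterating yields $\pi_1(\mathcal{K})\cong\pi_1(\mathcal{K}^2)$.

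The main obstacle is the disconnectedness of $V$ (and hence of $U\cap V$) when several $n$-simplices are attached at once, since the textbook form of van Kampen for a pair $\{U,V\}$ requires path-connected pieces with path-connected intersection. I would handle this either by invoking the fundamental groupoid version of van Kampen, or more concretely by attaching one $n$-simplex at a time, fixing a basepoint in $\mathcal{K}^{n-1}$ and a path from it into each new cell. Either approach is routine but requires care in bookkeeping, which is why the paper quite reasonably cites Proposition 1.26 of \cite{hatcher2002algebraic} rather than reproducing the argument in full.
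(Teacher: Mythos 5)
Your argument is correct and is precisely the standard proof of Proposition 1.26 in \cite{hatcher2002algebraic}, which is the result the paper invokes here without giving any argument of its own: one checks that attaching cells of dimension $n\geq 3$ leaves $\pi_1$ unchanged because the relevant intersection in the van Kampen cover is homotopy equivalent to $S^{n-1}$ and hence simply connected. Your proposed fixes for the disconnectedness of $V$ and $U\cap V$ (attaching one cell at a time with a path to the basepoint, or the groupoid version of van Kampen) are the standard and adequate ways to complete the bookkeeping, so nothing further is needed.
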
 

\begin{mydef}
If $\mathcal{K}$ is a connected simplicial complex, an \textbf{edge-path} is a chain of vertices connected by edges in $\mathcal{K}$. An \textbf{edge-loop} is an edge-path starting and ending at the same vertex. 
\end{mydef}

Next, we will define the notation for an edge-path. Let $v_1, v_2, \ldots, v_n$ be a sequence of vertices with $v_i$ and $v_{i+1}$ connected by edges for $i$ from $1$ to $n-1$. For convenience when describing edges, let the edge between $v_i$ and $v_{i+1}$ be denoted by $\langle v_i,v_{i+1}\rangle$. Let the path along these vertices be denoted as $\langle v_1, v_2\rangle\langle v_2, v_3\rangle, \ldots, \langle v_{n-1},v_n\rangle$. Note that using  this notation, the edge-path is an edge-loop if and only if $v_1 = v_n$. 

\begin{theorem}
Every loop in a simplicial complex is homotopic to an edge-loop.
\end{theorem}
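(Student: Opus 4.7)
The plan is to straighten any loop into an edge-loop by subdividing its domain, routing each piece through a nearby vertex, and then shortcutting through edges of $\mathcal{K}$. Let $f : [0,1] \to \mathcal{K}$ be a loop based at $x_0$. The open stars $\text{St}(v)$ of the vertices of $\mathcal{K}$ form an open cover of $\mathcal{K}$, so the open cover $\{f^{-1}(\text{St}(v))\}$ of the compact interval $[0,1]$ admits a Lebesgue number. This yields a partition $0 = t_0 < t_1 < \cdots < t_n = 1$ such that each image $f([t_i, t_{i+1}])$ lies in a single open star $\text{St}(v_i)$.

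Each $\text{St}(v_i)$ is star-shaped at $v_i$: every point of it lies in the interior of some closed simplex containing $v_i$, and that simplex is convex. The straight-line homotopy in $\text{St}(v_i)$ therefore deforms $f|_{[t_i, t_{i+1}]}$ rel endpoints into the broken linear path $f(t_i) \to v_i \to f(t_{i+1})$, and gluing these over $i$ produces a loop $f_1$ homotopic to $f$. Next, each breakpoint $f(t_{i+1})$ lies in both $\text{St}(v_i)$ and $\text{St}(v_{i+1})$, hence in the interior of a unique simplex $\tau_i$ containing both vertices; in particular $v_i$ and $v_{i+1}$ span an edge of $\mathcal{K}$, and inside the convex simplex $\tau_i$ the sub-path $v_i \to f(t_{i+1}) \to v_{i+1}$ is homotopic rel endpoints to that edge. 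A final homotopy inside the simplex $\tau$ in whose interior $x_0$ lies (which contains both $v_0$ and $v_{n-1}$ by the same uniqueness argument) replaces the initial and terminal segments $x_0 \to v_0$ and $v_{n-1} \to x_0$ by edges from and to some chosen vertex $w$ of $\tau$. The resulting loop $\langle w, v_0\rangle \langle v_0, v_1\rangle \cdots \langle v_{n-1}, w\rangle$ is an edge-loop homotopic to $f$.

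The main obstacle is guaranteeing that each edge $\langle v_i, v_{i+1}\rangle$ genuinely belongs to $\mathcal{K}$, since otherwise the final shortcut step is not available. This is resolved by the observation that $f(t_{i+1}) \in \text{St}(v_i) \cap \text{St}(v_{i+1})$ lies in the interior of a unique simplex of $\mathcal{K}$, which therefore must contain both $v_i$ and $v_{i+1}$; by the downward-closure in the definition of a simplicial complex, the pair $\{v_i, v_{i+1}\}$ is a face of this simplex and hence an edge of $\mathcal{K}$. The same uniqueness-of-containing-simplex argument handles the basepoint case, so the construction goes through and the glued homotopy is continuous across every breakpoint.
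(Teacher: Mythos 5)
Your proof is correct, and it takes a genuinely different route from the one in the paper. The paper's argument is a two-step appeal to quoted results: it first invokes Theorem \ref{skel} (the fundamental group depends only on the 2-skeleton) to push the loop into the 2-skeleton, and then informally deforms the loop off the interiors of 2-simplices onto their edges; the step producing an actual edge-loop (a vertex-to-vertex chain along edges of $\mathcal{K}$) is left implicit. Your argument is the classical simplicial-approximation proof for paths: cover $\mathcal{K}$ by open stars, use the Lebesgue number lemma to subdivide $[0,1]$, reroute each piece through the central vertex of its star using contractibility of the star, and then use the carrier of each breakpoint to certify that consecutive vertices $v_i, v_{i+1}$ actually span an edge of $\mathcal{K}$ --- which is exactly the point the paper glosses over and which you correctly identify as the main obstacle. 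Your approach is self-contained and elementary (it needs no external theorem about 2-skeletons), at the cost of being longer; the paper's approach is shorter but leans entirely on the cited Proposition 1.26 of Hatcher and on an unargued deformation. Two small polish points: the straight-line homotopy toward $v_i$ does not itself fix endpoints, so you should say explicitly that you are using simple connectivity of the contractible star $\text{St}(v_i)$ to homotope $f|_{[t_i,t_{i+1}]}$ rel endpoints to the broken path; and your final adjustment at the basepoint moves the basepoint from $x_0$ to $w$, so the resulting edge-loop is freely homotopic to $f$ (or based-homotopic after conjugating by the segment $x_0 \to w$ inside $\tau$), which suffices for the statement as given.
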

\begin{proof}
By Theorem \ref{skel}, we know that the fundamental group of a simplicial complex only depends on its 2-skeleton, so every loop in the complex is homotopic to a loop in the 2-skeleton. Now consider a loop that passes through a 2-simplex. Because simplices are simply-connected, this loop can be continuously deformed to the edge of the 2-simplex. 
\end{proof}
Note that the fundamental group of the simplicial complex is not isomorphic to the fundamental group of the 1-skeleton. To see this, consider a simplicial complex consisting only of a 2-simplex. The fundamental group of the 2-simplex is the trivial group, whereas the fundamental group of the 1-skeleton of the 2-simplex is $\mathbb{Z}$. 

\begin{mydef}
Let $v_1$, $v_2$, and $v_3$ be three not necessarily distinct vertices in a simplicial complex; let $\mathcal{P}_1$ be a path ending at $v_1$ and let $\mathcal{P}_2$ be a path beginning at $v_2$. We define the path $\mathcal{P}_1\langle v_1, v_3\rangle\langle v_3, v_2\rangle \mathcal{P}_2$ to be \textbf{simply equivalent} to the path $\mathcal{P}_1\langle v_1, v_2\rangle \mathcal{P}_2$ if and only if there exists a simplex containing $v_1$, $v_2$, and $v_3$.

\end{mydef}
Given two edge-paths $\mathcal{P}_1$ and $\mathcal{P}_2$, we use the notation $\mathcal{P}_1\sim \mathcal{P}_2$ to mean that $\mathcal{P}_1$ is simply equivalent to $\mathcal{P}_2$. 

Note that the idea of a simple equivalence can take three forms, depending on which of $v_1$, $v_2$, and $v_3$ are distinct. If the three vertices are distinct, the simple equivalence takes the path $P_1\langle v_1, v_3\rangle\langle v_3, v_2\rangle P_2$ to the path $P_1\langle v_1, v_2\rangle P_2$ if and only if there exists a 2-simplex between the vertices $v_1$, $v_2$, and $v_3$. 

If $v_1=v_2$, edge-paths of the form $P_1\langle v_1, v_3\rangle\langle v_3, v_1\rangle P_2$ and $P_1 P_2$ are always simply equivalent, because by the definition of an edge-path there is a 1-simplex between $v_1$ and $v_3$.

Now we consider three cases where the vertices are not distinct. If $v_1 = v_3$, $v_2 = v_3$, or $v_1=v_2=v_3$, the two edge-paths that are simply equivalent will look the same -- the only difference is that repeated vertices in the path are removed.

Given two edge-loops, if one can be obtained from the other by a finite number of simple equivalences we say the loops are homotopic.  It is a fact homotopy forms an equivalence relation on the edge-loops of a simplicial complex.  With these equivalence classes we can define a group of equivalence classes of edge-loops on the 2-skeleton of the simplicial complex. By Theorem \ref{skel}, this group of equivalence classes of edge-loops in the 2-skeleton is isomorphic to the fundamental group of the simplicial complex. 

We will now show that $\eta$ induces a map on edge-paths. But first we need a theorem regarding a very interesting subset reversal property of $\eta$. 

\begin{theorem}(Subset Reversal)\label{face}
Given two simplices $\sigma_1$ and $\sigma_2$ of $\mathcal{G}(X)$, if $\sigma_1$ is a face of $\sigma_2$, then $\eta(\sigma_2)$ is a face of $\eta(\sigma_1)$. 
\end{theorem}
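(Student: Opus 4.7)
The plan is to chase through the definitions and use the basic fact that enlarging a set of integers can only shrink its greatest common divisor (in the divisibility order).

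First I would unfold the definition of ``face'' from Section \ref{prelims}: saying $\sigma_1$ is a face of $\sigma_2$ means exactly that the vertex set of $\sigma_1$ is a subset of the vertex set of $\sigma_2$. So I start with $\sigma_1 \subseteq \sigma_2$ as subsets of $X^*$.

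Next, the key arithmetic observation: whenever $A \subseteq B$ are finite sets of positive integers, $\gcd(B) \mid \gcd(A)$, because every integer that divides all elements of $B$ certainly divides all elements of the subcollection $A$. Applying this to $\sigma_1 \subseteq \sigma_2$ gives $\gcd(\sigma_2) \mid \gcd(\sigma_1)$. Therefore every prime dividing $\gcd(\sigma_2)$ also divides $\gcd(\sigma_1)$, which is exactly the inclusion $\pi(\sigma_2) \subseteq \pi(\sigma_1)$.

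Finally, by the definition of $\eta$, the simplex $\eta(\sigma_i)$ in $\mathcal{D}(X)$ has vertex set $\pi(\sigma_i)$. The inclusion $\pi(\sigma_2) \subseteq \pi(\sigma_1)$ therefore says precisely that $\eta(\sigma_2)$ is a face of $\eta(\sigma_1)$, completing the argument.

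There is no real obstacle here; the statement is essentially a restatement of the order-reversing behavior of $\gcd$ under set inclusion, combined with the fact that ``more primes in the gcd'' is exactly ``bigger simplex in $\mathcal{D}(X)$.'' The only thing to be careful about is notational: confirming that $\pi(\sigma)$ as defined (the primes dividing $\gcd(\sigma)$) behaves as set-valued and is monotonically \emph{decreasing} in $\sigma$, so the inclusion genuinely reverses direction as the theorem states.
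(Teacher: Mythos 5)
Your proposal is correct and follows essentially the same route as the paper: vertex containment $\sigma_1\subseteq\sigma_2$ gives $\gcd(\sigma_2)\mid\gcd(\sigma_1)$, hence $\pi(\sigma_2)\subseteq\pi(\sigma_1)$, which is exactly the face relation between $\eta(\sigma_2)$ and $\eta(\sigma_1)$. Your version is, if anything, slightly more explicit about why the gcd divisibility holds.
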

\begin{proof}
Because $\sigma_1$ is a face of $\sigma_2$, its set of vertices are a subset of the set of vertices of $\sigma_2$. Therefore the set of primes of the greatest common divisor of the vertices of $\sigma_2$ are a subset of the set of primes of the greatest common divisor of the vertices of $\sigma_1$. This means that the dimension of $\eta(\sigma_2)$ is less than or equal to the dimension of $\eta(\sigma_1)$, and because the vertices in $\eta(\sigma_2)$ are also vertices of $\eta(\sigma_1)$ we have that $\eta(\sigma_2)$ is a face of $\eta(\sigma_1)$.
\end{proof}

\section{The Induced Map}\label{inducedmap}

We will show that the map $\eta$ induces a map $\eta_*$ from edge-paths in $\mathcal{G}(X)$ to edge-paths in $\mathcal{D}(X)$. For edge-paths, we are only concerned with $\eta$'s effect on 0 and 1-simplices. Because an edge-path consists of an alternating sequence of vertices and edges, we can look at the effect that $\eta$ has on vertices and edges. By Theorem \ref{face}, the image of an edge is a face of the images of the vertices on either side of the edge, so we can simply look at the image of the edges in the path.
\begin{mydef}\label{inducedmapdef}
Let $\mathcal{L}=\langle v_1,v_2\rangle\ldots\langle v_{n-1},v_n\rangle$ be an edge-path in $\mathcal{G}(X)$. Fix a point $\omega_i\in\eta(\langle v_i,v_{i+1}\rangle)$ for each $1\leq i\leq n-1$, and fix another point $\omega_n\in\eta(v_n)$. Define $\eta_*(\mathcal{L}) = \langle \omega_1,\omega_2\rangle\ldots\langle \omega_{n-1},\omega_n\rangle$. 
\end{mydef}
We know that edges exist between each successive $\omega_i$ and $\omega_{i+1}$ in $\mathcal{D}(X)$ because $\omega_i$ and $\omega_{i+1}$ both divide $v_{i+1}$. Note that the paths given for all possible choices for $\omega_i$ with the same endpoints are homotopic because every possible choice for $\omega_i$ is necessarily inside the simplex $\eta(\langle v_i, v_{i+1}\rangle)$, and simplices are simply-connected. Because edge-loops are edge-paths with $v_1=v_n$, $\eta_*$ can be applied to them as well if we choose $\omega_n$ to be the same point as $\omega_1$. 

\begin{theorem}
The induced map $\eta_*$ is a well-defined map on edge-paths. 
\end{theorem}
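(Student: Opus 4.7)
The plan is to verify both aspects of what ``well-defined'' should mean here: that $\eta_*$ (a) sends an edge-path in $\mathcal{G}(X)$ to a genuine edge-path in $\mathcal{D}(X)$, and (b) produces a homotopy class that does not depend on the ancillary choices of $\omega_i$ in Definition \ref{inducedmapdef}.

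For part (a), I would first observe that each $\omega_i$ is a vertex of the simplex $\eta(\langle v_i, v_{i+1}\rangle)$ and hence a vertex (prime) of $\mathcal{D}(X)$. To verify that consecutive $\omega_i$ and $\omega_{i+1}$ are actually connected by an edge, I would unwind the definitions: $\omega_i$ is a prime dividing $\gcd(v_i, v_{i+1})$ and $\omega_{i+1}$ is a prime dividing $\gcd(v_{i+1}, v_{i+2})$, so both primes divide $v_{i+1}\in X$, forcing $\{\omega_i, \omega_{i+1}\}$ to be a $1$-simplex in $\mathcal{D}(X)$. The terminal edge $\langle \omega_{n-1}, \omega_n\rangle$ is handled the same way using $v_n$, since $\omega_n\in\eta(v_n)$ is a prime dividing $v_n$ and $\omega_{n-1}$ also divides $v_n$.

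For part (b), I would show that swapping a single $\omega_i$ for another valid choice $\omega_i'\in\eta(\langle v_i, v_{i+1}\rangle)$ yields a simply equivalent edge-path; iterating handles the general case. The key fact is that the primes $\omega_{i-1},\omega_i,\omega_i'$ all divide $v_i$, so they jointly sit inside a simplex of $\mathcal{D}(X)$; likewise $\omega_i,\omega_i',\omega_{i+1}$ all divide $v_{i+1}$ and therefore span a simplex of $\mathcal{D}(X)$. Two applications of simple equivalence then yield
\[
\cdots\langle \omega_{i-1}, \omega_i\rangle\langle \omega_i, \omega_{i+1}\rangle\cdots
\sim \cdots\langle \omega_{i-1}, \omega_i'\rangle\langle \omega_i', \omega_i\rangle\langle \omega_i, \omega_{i+1}\rangle\cdots
\sim \cdots\langle \omega_{i-1}, \omega_i'\rangle\langle \omega_i', \omega_{i+1}\rangle\cdots,
\]
using the first simplex to insert the detour through $\omega_i'$ and the second simplex to collapse the detour through $\omega_i$. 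Iterating this swap over each affected index produces the edge-path attached to any other admissible family of choices, establishing homotopy. The edge-loop case imposes only the extra constraint $\omega_n=\omega_1$, which does not obstruct the above argument.

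The main obstacle is a mild bookkeeping one: the terminal choice $\omega_n\in\eta(v_n)$ is drawn from a different simplex than the interior $\omega_i$, so the existence-of-edge argument in (a) and the swap argument in (b) must be checked separately at the endpoint (and, for loops, coordinated with the base point). Once the interior swap is in place, the boundary version uses the same principle applied to the simplex $\eta(v_n)$, so no genuinely new ingredient is required.
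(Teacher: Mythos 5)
Your proposal is internally correct, and your part (b) is actually a sharper, fully combinatorial version of a point the paper disposes of in one sentence (``simplices are simply-connected'') in the remarks immediately following Definition \ref{inducedmapdef}. The edge-existence check in (a) and the two-step swap $\langle\omega_{i-1},\omega_i\rangle\langle\omega_i,\omega_{i+1}\rangle \sim \langle\omega_{i-1},\omega_i'\rangle\langle\omega_i',\omega_i\rangle\langle\omega_i,\omega_{i+1}\rangle \sim \langle\omega_{i-1},\omega_i'\rangle\langle\omega_i',\omega_{i+1}\rangle$ are both sound, since $\omega_{i-1},\omega_i,\omega_i'$ all divide $v_i$ and $\omega_i,\omega_i',\omega_{i+1}$ all divide $v_{i+1}$.

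However, you have proved a different statement from the one this theorem is carrying in the paper, and the part you omit is the essential one. What the paper's proof establishes is homotopy invariance of $\eta_*$: if $\mathcal{L}_0$ and $\mathcal{L}_1$ are edge-paths in $\mathcal{G}(X)$ that differ by a simple equivalence --- say $\mathcal{L}_0$ contains $\langle v_a,v_1\rangle\langle v_1,v_2\rangle\langle v_2,v_b\rangle$ where $\mathcal{L}_1$ contains $\langle v_a,v_1\rangle\langle v_1,v_3\rangle\langle v_3,v_2\rangle\langle v_2,v_b\rangle$ with $\{v_1,v_2,v_3\}$ a simplex of $\mathcal{G}(X)$ --- then $\eta_*(\mathcal{L}_0)$ and $\eta_*(\mathcal{L}_1)$ are homotopic in $\mathcal{D}(X)$. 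Without this, $\eta_*$ does not descend to homotopy classes of edge-loops, and none of the subsequent results (that $\eta_*$ is a homomorphism, injection, and isomorphism on $\pi_1$) can get off the ground; independence of the auxiliary choices of $\omega_i$ alone is not enough. The missing argument is not automatic: it uses the fact that the $2$-simplex $\{v_1,v_2,v_3\}$ in $\mathcal{G}(X)$ forces a single prime $\omega_{123}$ dividing all of $v_1,v_2,v_3$, and then a chain of five simple equivalences in $\mathcal{D}(X)$ --- passing through the simplices spanned by $\{\omega_{a1},\omega_{12},\omega_{13}\}$ (all dividing $v_1$), $\{\omega_{12},\omega_{23},\omega_{2b}\}$ (all dividing $v_2$), and the three simplices involving $\omega_{123}$ --- carries $\eta_*(\mathcal{P}_1)=\langle\omega_{a1},\omega_{13}\rangle\langle\omega_{13},\omega_{23}\rangle\langle\omega_{23},\omega_{2b}\rangle$ to $\eta_*(\mathcal{P}_0)=\langle\omega_{a1},\omega_{12}\rangle\langle\omega_{12},\omega_{2b}\rangle$. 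You should add this argument (and the routine induction on the number of simple equivalences) to complete the proof; your parts (a) and (b) can then stand as a careful justification of the preliminary remarks.
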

\begin{proof}
Let $\mathcal{L}_0$ and $\mathcal{L}_1$ be two homotopic paths in $\mathcal{G}(X)$. We will show that their images under $\eta_*$ are homotopic in $\mathcal{D}(X)$. For $\mathcal{L}_0$ and $\mathcal{L}_1$ to be homotopic in the domain means that there is a sequence of simple equivalences from $\mathcal{L}_0$ to $\mathcal{L}_1$.  Assume $\mathcal{L}_0$ and $\mathcal{L}_1$ differ by one simple equivalence. Let $\mathcal{L}_0$  contain the edge-path segment $\mathcal{P}_0=\langle v_a,v_1\rangle\langle v_1,v_2\rangle\langle v_2,v_b\rangle$ and $\mathcal{L}_1$ contain the edge-path segment $\mathcal{P}_1=\langle v_a,v_1\rangle\langle v_1,v_3\rangle\langle v_3,v_2\rangle\langle v_2,v_b\rangle$, where $v_1$, $v_2$, and $v_3$ are not necessarily distinct. For $\mathcal{L}_0$ and $\mathcal{L}_1$ to differ by one simple equivalence, there must exist a simplex with $v_1$, $v_2$, and $v_3$ as vertices. Because this simplex exists, $v_1$, $v_2$, and $v_3$ must share a prime divisor which we will call $\omega_{123}$. Now we will apply Definition \ref{inducedmapdef} to $\mathcal{L}_0$ and $\mathcal{L}_1$ to find $\eta_*(\mathcal{L}_0)$ and $\eta_*(\mathcal{L}_1)$. For every edge in the two loops, we know that there exists a prime dividing the vertices on either side of the edge. Let $\omega_{a1}$ divide $v_a$ and $v_1$, $\omega_{12}$ divide $v_1$ and $v_2$, $\omega_{13}$ divide $v_1$ and $v_3$, $\omega_{23}$ divide $v_2$ and $v_3$, and $\omega_{2b}$ divide $v_2$ and $v_b$. We see that $\eta_*(\mathcal{L}_0)$ passes through the simplices $\eta(\langle v_a,v_1\rangle)$, $\eta(\langle v_1,v_2\rangle)$, and $\eta(\langle v_2,v_b\rangle)$. Because simplices are simply-connected any path through these simplices can be continuously deformed to the path $\langle \omega_{a1},\omega_{12}\rangle\langle \omega_{12},\omega_{2b}\rangle$ in $\mathcal{D}(X)$. Likewise $\eta_*(\mathcal{L}_1)$ passes through $\eta(\langle v_a,v_1\rangle)$, $\eta(\langle v_1,v_3\rangle)$, $\eta(\langle v_3,v_2\rangle)$, and $\eta(\langle v_2,v_b\rangle)$, and this segment of $\mathcal{L}_1$ can be continuously deformed to the path $\langle \omega_{a1},\omega_{13}\rangle\langle \omega_{13},\omega_{23}\rangle\langle \omega_{23},\omega_{2b}\rangle$. That is, $\eta_*(\mathcal{P}_0)=\langle \omega_{a1},\omega_{12}\rangle\langle \omega_{12},\omega_{2b}\rangle$ and $\eta_*(\mathcal{P}_1)=\langle \omega_{a1},\omega_{13}\rangle\langle \omega_{13},\omega_{23}\rangle\langle \omega_{23},\omega_{2b}\rangle$.

Our goal now is to show that because $\mathcal{L}_0$ and $\mathcal{L}_1$ are homotopic, the images of $\mathcal{L}_0$ and $\mathcal{L}_1$ under $\eta_*$ are homotopic. Since $\mathcal{L}_0$ and $\mathcal{L}_1$ agree everywhere except for $\mathcal{P}_0$ and $\mathcal{P}_1$, it suffices to show that the images of these two paths under $\eta_*$ are homotopic. Now because $\omega_{a1}$, $\omega_{12}$, and $\omega_{13}$ all divide $v_1$, these primes form a simplex in $\mathcal{D}(X)$. Likewise since $\omega_{12}$, $\omega_{23}$, and $\omega_{2b}$ all divide $v_2$, these primes also form a simplex. In the same way because $\omega_{13}$, $\omega_{12}$, and $\omega_{123}$ all divide $v_1$, $\omega_{23}$, $\omega_{12}$, and $\omega_{123}$ all divide $v_2$, and $\omega_{13}$, $\omega_{23}$, and $\omega_{123}$ all divide $v_3$, these three groups of primes form three simplices in $\mathcal{D}(X)$. Because we have all of these simplices, we see that 
\begin{align*}
\langle\omega_{a1},\omega_{13}\rangle\langle\omega_{13},\omega_{23}\rangle\langle\omega_{23},\omega_{2b}\rangle &\sim\langle\omega_{a1},\omega_{13}\rangle\langle \omega_{13},\omega_{123}\rangle\langle \omega_{123},\omega_{23}\rangle\langle\omega_{23},\omega_{2b}\rangle\\
&\sim\langle\omega_{a1},\omega_{13}\rangle\langle\omega_{13},\omega_{12}\rangle\langle \omega_{12},\omega_{123}\rangle\langle \omega_{123},\omega_{23}\rangle\langle\omega_{23},\omega_{2b}\rangle\\
&\sim\langle\omega_{a1},\omega_{13}\rangle\langle\omega_{13},\omega_{12}\rangle\langle \omega_{12},\omega_{23}\rangle\langle\omega_{23},\omega_{2b}\rangle\\
&\sim\langle\omega_{a1},\omega_{12}\rangle\langle \omega_{12},\omega_{23}\rangle\langle\omega_{23},\omega_{2b}\rangle\\
&\sim\langle\omega_{a1},\omega_{12}\rangle\langle \omega_{12},\omega_{2b}\rangle.
\end{align*}
Thus $\eta_*(\mathcal{L}_0)$ is homotopic to $\eta_*(\mathcal{L}_1)$. 

Likewise if $\mathcal{L}_0$ and $\mathcal{L}_1$ differ by a finite number of successive simple equivalences, we can find a chain of intermediary paths from $\mathcal{L}_0$ and $\mathcal{L}_1$, each differing by a simple equivalence, with each path in the chain homotopic to the last. Since each image in the chain is homotopic to the image of the last, we know that $\eta_*(\mathcal{L}_0)$ and $\eta_*(\mathcal{L}_1)$ are homotopic by transitivity. Therefore the induced map $\eta_*$ is well-defined. 
\end{proof}

Having established that $\eta_*$ is well-defined, we now explicitly define the inverse map $\eta_*^{-1}$ and show that it satisfies the properties of the inverse of $\eta_*$. That is, given an edge-loop $\mathcal{L}$ in $\mathcal{G}(X)$, we will show that the path $\eta_*^{-1}(\eta_*(\mathcal{L}))$ is homotopic to $\mathcal{L}$ in $\mathcal{G}(X)$.  Similarly, if $\mathcal{L}$ is an edge-loop in $\mathcal{D}(X)$, we show that $\eta_*(\eta_*^{-1}(\mathcal{L}))$ is homotopic to $\mathcal{L}$ in $\mathcal{D}(X)$. 

\begin{mydef}\label{inverseinducedmap}
Let $\mathcal{L}=\langle p_1,p_2\rangle\ldots\langle p_{n-1},p_n\rangle$ be an edge-path in $\mathcal{D}(X)$. For every point $p_i\in\mathcal{L}$ for $1\leq i\leq n$, there exists a simplex $\sigma_i\in\mathcal{G}(X)$ such that $p_i$ divides $v_j$ for all $v_j\in\sigma_i$. Fix a point $\alpha_1\in\sigma_1$, and for each $2\leq i\leq n$, fix a point $\alpha_i\in\sigma_{i-1}\cap\sigma_{i}$. Define $\eta_*^{-1}(\mathcal{L}) = \langle\alpha_1,\alpha_2 \rangle\ldots\langle \alpha_{n-1},\alpha_n\rangle$. 
\end{mydef}

Note that the intersection between each successive $\sigma_{i-1}$ and $\sigma_i$ is non-empty because $p_{i-1}$ and $p_i$ are connected by an edge in $\mathcal{D}(X)$. We know that edges exist between each successive $\alpha_{i-1}$ and $\alpha_i$ in $\mathcal{G}(X)$ because $\alpha_{i-1}$ and $\alpha_i$ are both divisible by $p_i$. 

Again note that the paths given for all possible choices for $\alpha_i$ with the same endpoints are homotopic because every possible choice for $\alpha_i$ is necessarily inside the simplex $\sigma_{i-1}\cap\sigma_i$, and simplices are simply-connected. Because edge-loops are edge-paths with $p_1=p_n$, $\eta_*^{-1}$ can be applied to them as well if we choose $\alpha_n$ to be the same point as $\alpha_1$. 

\begin{theorem}\label{inverse}
Given an edge-loop $\mathcal{L}=\langle v_1,v_2\rangle\ldots\langle v_{n-1},v_n\rangle\in\mathcal{G}(X)$, $\eta_*^{-1}(\eta_*(\mathcal{L}))$ is homotopic to $\mathcal{L}$.
\end{theorem}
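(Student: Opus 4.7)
The plan is to exploit the freedom in choosing the intermediate data that goes into $\eta_*$ and $\eta_*^{-1}$: by making canonical selections, I will arrange that one representative of $\eta_*^{-1}(\eta_*(\mathcal{L}))$ equals $\mathcal{L}$ on the nose, and then invoke the homotopy invariance of these constructions (noted in the remarks following Definitions \ref{inducedmapdef} and \ref{inverseinducedmap}) to conclude that every representative is homotopic to $\mathcal{L}$.

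First, I apply $\eta_*$: for each $1\leq i\leq n-1$ I choose a prime $\omega_i$ dividing $\gcd(v_i,v_{i+1})$, and set $\omega_n=\omega_1$ to respect the loop condition (this is legitimate because $v_1=v_n$, so $\omega_1$ divides $v_n$ and therefore lies in $\eta(v_n)$). This yields the edge-loop $\eta_*(\mathcal{L})=\langle\omega_1,\omega_2\rangle\ldots\langle\omega_{n-1},\omega_n\rangle$ in $\mathcal{D}(X)$.

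Next I evaluate $\eta_*^{-1}$ on this loop. For each vertex $\omega_i$, I must select a simplex $\sigma_i$ in $\mathcal{G}(X)$ whose vertices are all divisible by $\omega_i$. The decisive observation is that the original edge $\langle v_i,v_{i+1}\rangle$ of $\mathcal{L}$ is itself a legitimate choice of $\sigma_i$ for $1\leq i\leq n-1$, because $\omega_i$ was chosen to divide both $v_i$ and $v_{i+1}$; for $\sigma_n$ I take the $0$-simplex $\{v_n\}$. The intersection $\sigma_{i-1}\cap\sigma_i=\langle v_{i-1},v_i\rangle\cap\langle v_i,v_{i+1}\rangle$ then contains the vertex $v_i$, so setting $\alpha_1=v_1$ and $\alpha_i=v_i$ for $2\leq i\leq n$ gives valid intermediate points; the loop condition $\alpha_n=\alpha_1$ holds automatically because $v_n=v_1$. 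Substituting into Definition \ref{inverseinducedmap} then yields $\eta_*^{-1}(\eta_*(\mathcal{L}))=\langle v_1,v_2\rangle\langle v_2,v_3\rangle\ldots\langle v_{n-1},v_n\rangle=\mathcal{L}$.

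The main obstacle is upgrading this one equality into a statement valid up to homotopy for every admissible collection of choices. This requires a well-definedness result for $\eta_*^{-1}$ parallel to the one already established for $\eta_*$: two images of the same edge-loop arising from different selections of the $\sigma_i$ and $\alpha_i$ must be connectable by a chain of simple equivalences. I would mirror the earlier well-definedness proof for $\eta_*$, using Theorem \ref{face} (Subset Reversal) to guarantee that the higher-dimensional simplices witnessing the simple equivalences in $\mathcal{G}(X)$ actually exist whenever the corresponding primes share a simplex in $\mathcal{D}(X)$. Once well-definedness of $\eta_*^{-1}$ is secured, the canonical computation above immediately gives $\eta_*^{-1}(\eta_*(\mathcal{L}))\sim\mathcal{L}$.
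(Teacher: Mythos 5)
Your argument is correct in outline, but it takes a genuinely different route from the paper's. The paper works with an \emph{arbitrary} admissible choice of the $\omega_i$ and $\alpha_i$ and exhibits the homotopy directly: since $\omega_i$ divides all four of $v_i$, $v_{i+1}$, $\alpha_i$, and $\alpha_{i+1}$, these vertices span a common simplex of $\mathcal{G}(X)$, and the simply-connectedness of that simplex lets one slide the segment $\langle\alpha_i,\alpha_{i+1}\rangle$ onto $\langle v_i,v_{i+1}\rangle$. That proof is self-contained and needs no appeal to independence of choices. You instead make the canonical selections $\omega_i\mid\gcd(v_i,v_{i+1})$ and $\alpha_i=v_i$, obtain $\eta_*^{-1}(\eta_*(\mathcal{L}))=\mathcal{L}$ on the nose, and then offload all remaining work to choice-independence and to the well-definedness of $\eta_*^{-1}$ (Theorem \ref{inversewelldefined} in the paper, which is proved later but independently, so there is no circularity --- only a forward reference you would need to make explicit). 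Two small cautions. First, in Definition \ref{inverseinducedmap} the simplex $\sigma_i$ should be read as the \emph{maximal} simplex of vertices divisible by $p_i$ (otherwise the claim $\sigma_{i-1}\cap\sigma_i\neq\emptyset$ can fail), so your statement that the edge $\langle v_i,v_{i+1}\rangle$ is ``a legitimate choice of $\sigma_i$'' is not quite the right framing; what saves you is that $v_i$ does lie in $\sigma_{i-1}\cap\sigma_i$ for the maximal simplices, so $\alpha_i=v_i$ remains a valid choice of intermediate point. Second, the remark after Definition \ref{inverseinducedmap} only covers choices of $\alpha_i$ ``with the same endpoints,'' so your reduction quietly uses the same basepoint conventions the paper does; this is no worse than the paper's own level of rigor, but it is where the residual work lives. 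What your approach buys is a cleaner conceptual picture (the composite is literally the identity for the right choices); what the paper's buys is a shorter, self-contained verification.
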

\begin{proof}
Given an edge-loop $\mathcal{L}=\langle v_1,v_2\rangle\ldots\langle v_{n-1},v_n\rangle\in\mathcal{G}(X)$, we will show that $\eta_*^{-1}(\eta_*(\mathcal{L}))$ is homotopic to $\mathcal{L}$. Using the definition of $\eta_*$, we construct the path $\eta_*(\mathcal{L}) = \langle \omega_1,\omega_2\rangle\ldots\langle \omega_{n-1},\omega_n\rangle$ so that $\omega_i$ divides $v_i$ and $v_{i+1}$ for $1\leq i\leq n-1$. In the same way, using the definition of $\eta_*^{-1}$, we construct the path $\eta_*^{-1}(\eta_*(\mathcal{L})) = \langle \alpha_1,\alpha_2\rangle\ldots\langle \alpha_{n-1},\alpha_n\rangle$, choosing $\alpha_1=v_1$ and $\alpha_i$ divisible by $\omega_{i-1}$ and $\omega_i$ for $2 \leq i\leq n$. Now since $\omega_i$ divides each of $v_i$, $v_{i+1}$, $\alpha_i$, and $\alpha_{i+1}$ for all $i$, these vertices are members of the same simplex in $\mathcal{G}(X)$; the simplex with all of its vertices divisible by $\omega_i$. Therefore $\eta_*^{-1}(\eta_*(\mathcal{L}))$ can be continuously deformed to $\mathcal{L}$ and they have the same base point, so the two paths are homotopic. 
\end{proof}
\begin{theorem}\label{inverse2}
Given an edge-loop $\mathcal{L}=\langle p_1,p_2\rangle\ldots\langle p_{n-1},p_n\rangle\in\mathcal{D}(X)$, $\eta_*(\eta_*^{-1}(\mathcal{L}))$ is homotopic to $\mathcal{L}$.
\end{theorem}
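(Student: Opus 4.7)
My plan is to mirror the argument used for Theorem \ref{inverse}, exploiting the dual fact that the ``vertex'' primes $p_i$ and the ``edge'' primes $\omega_i$ all lie in a common simplex of $\mathcal{D}(X)$, and then to deform $\mathcal{L}$ onto $\eta_*(\eta_*^{-1}(\mathcal{L}))$ explicitly by a short chain of simple equivalences.

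First I would unwind the definitions. Starting from $\mathcal{L} = \langle p_1, p_2\rangle \cdots \langle p_{n-1}, p_n\rangle$, Definition \ref{inverseinducedmap} gives simplices $\sigma_i \in \mathcal{G}(X)$ and vertices $\alpha_1 \in \sigma_1$ and $\alpha_i \in \sigma_{i-1} \cap \sigma_i$ for $2 \leq i \leq n$, producing $\eta_*^{-1}(\mathcal{L}) = \langle \alpha_1, \alpha_2\rangle \cdots \langle \alpha_{n-1}, \alpha_n\rangle$. Then by Definition \ref{inducedmapdef}, $\eta_*(\eta_*^{-1}(\mathcal{L})) = \langle \omega_1, \omega_2\rangle \cdots \langle \omega_{n-1}, \omega_n\rangle$, where $\omega_i$ divides $\alpha_i$ and $\alpha_{i+1}$ for $i < n$ and $\omega_n$ divides $\alpha_n$. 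To match basepoints I would choose $\omega_1 = p_1$ and $\omega_n = p_n$, valid choices since $p_1$ divides every vertex of $\sigma_1$ (hence $\alpha_1$ and $\alpha_2$) and $p_n$ divides $\alpha_n \in \sigma_n$.

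Next I would establish the key observation: for each $i$ with $1 \leq i \leq n-1$, the four primes $p_i, p_{i+1}, \omega_i, \omega_{i+1}$ all divide $\alpha_{i+1}$. Indeed, $\alpha_{i+1} \in \sigma_i \cap \sigma_{i+1}$ forces $p_i, p_{i+1} \mid \alpha_{i+1}$, while $\omega_i$ and $\omega_{i+1}$ each divide $\alpha_{i+1}$ by their construction in $\eta_*$. Hence these four primes are all vertices of the single simplex $\eta(\alpha_{i+1})$ in $\mathcal{D}(X)$, so any three of them span a face and provide a legitimate setting for a simple equivalence. With this in hand the homotopy proceeds in two passes. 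First, for each $i$, using the simplex on $\{p_i, \omega_i, p_{i+1}\}$, expand $\langle p_i, p_{i+1}\rangle \sim \langle p_i, \omega_i\rangle \langle \omega_i, p_{i+1}\rangle$ to rewrite $\mathcal{L}$ as $\langle p_1, \omega_1\rangle \langle \omega_1, p_2\rangle \langle p_2, \omega_2\rangle \cdots \langle p_{n-1}, \omega_{n-1}\rangle \langle \omega_{n-1}, p_n\rangle$. Second, using the simplex on $\{\omega_i, p_{i+1}, \omega_{i+1}\}$, collapse each interior pair $\langle \omega_i, p_{i+1}\rangle \langle p_{i+1}, \omega_{i+1}\rangle \sim \langle \omega_i, \omega_{i+1}\rangle$, producing $\langle p_1, \omega_1\rangle \langle \omega_1, \omega_2\rangle \cdots \langle \omega_{n-2}, \omega_{n-1}\rangle \langle \omega_{n-1}, p_n\rangle$. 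Since $\omega_1 = p_1$ and $\omega_n = p_n$, the initial edge $\langle p_1, \omega_1\rangle$ is a repeated vertex that can be dropped via the coinciding-vertex case ($v_1 = v_3$) of the simple-equivalence rules, and $\langle \omega_{n-1}, p_n\rangle$ is already $\langle \omega_{n-1}, \omega_n\rangle$; what remains is exactly $\eta_*(\eta_*^{-1}(\mathcal{L}))$.

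The main obstacle I anticipate is careful bookkeeping at the boundary indices $i = 1$ and $i = n-1$, where $\omega_1$, $\omega_n$, and $\alpha_1$ play slightly different roles than the interior ones, and checking that the repeated-vertex clauses of the simple-equivalence definition correctly remove the degenerate edges at the basepoint. Once the ``all four primes divide $\alpha_{i+1}$'' observation is in place, the rest of the argument is mechanical and directly parallels the proof of Theorem \ref{inverse}.
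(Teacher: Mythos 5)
Your proposal is correct and follows essentially the same route as the paper: both arguments hinge on the observation that the primes $p_i$, $p_{i+1}$, $\omega_i$, $\omega_{i+1}$ all divide the common vertex $\alpha_{i+1}$ and hence lie in a single simplex of $\mathcal{D}(X)$, after choosing $\omega_1=p_1$ (and $\omega_n=p_n$) to match basepoints. Your two-pass chain of simple equivalences merely makes explicit the deformation that the paper compresses into ``can be continuously deformed,'' which is a welcome refinement but not a different proof.
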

\begin{proof}
Given an edge-loop $\mathcal{L}=\langle p_1,p_2\rangle\ldots\langle p_{n-1},p_n\rangle\in\mathcal{D}(X)$, we will show that $\eta_*(\eta_*^{-1}(\mathcal{L}))$ is homotopic to $\mathcal{L}$. Using the definition of $\eta_*^{-1}$, we construct the path $\eta_*^{-1}(\mathcal{L}) = \langle \alpha_1,\alpha_2\rangle\ldots\langle \alpha_{n-1},\alpha_n\rangle$ so that $\alpha_i$ is divisible by $p_i$ and $p_{i+1}$ for $2 \leq i\leq n$. In the same way, using the definition of $\eta_*$, we construct the path $\eta_*(\eta_*^{-1}(\mathcal{L})) = \langle \omega_1,\omega_2\rangle\ldots\langle \omega_{n-1},\omega_n\rangle$, choosing $\omega_1=p_1$ and $\omega_i$ dividing $\alpha_i$ and $\alpha_{i+1}$ for $1\leq i\leq n-1$. Now since $p_i$, $p_{i-1}$, $\omega_i$, and $\omega_{i-1}$ all divide $\alpha_i$ for all $i$, these primes are members of the same simplex in $\mathcal{D}(X)$; the simplex where all vertices divide $\omega_i$. Therefore $\eta_*(\eta_*^{-1}(\mathcal{L}))$ can be continuously deformed to $\mathcal{L}$ and they have the same base point, so the two paths are homotopic. 
\end{proof}

We will now show that $\eta_*$ is a homomorphism on $\pi_1(\mathcal{G}(X))$ and $\pi_1(\mathcal{D}(X))$; that is, we will show that $\eta_*$ is operation preserving. 

\begin{theorem}\label{homomorphism}
The map $\eta_*$ is a homomorphism between $\pi_1(\mathcal{G}(X))$ and $\pi_1(\mathcal{D}(X))$. 
\end{theorem}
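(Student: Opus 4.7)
The plan is to take two edge-loops $\mathcal{L}_1$ and $\mathcal{L}_2$ in $\mathcal{G}(X)$ based at a common vertex $v_*$ and exhibit a short chain of simple equivalences carrying $\eta_*(\mathcal{L}_1)\cdot\eta_*(\mathcal{L}_2)$ to $\eta_*(\mathcal{L}_1\cdot\mathcal{L}_2)$. Writing $\mathcal{L}_1=\langle v_1,v_2\rangle\ldots\langle v_{n-1},v_n\rangle$ and $\mathcal{L}_2=\langle u_1,u_2\rangle\ldots\langle u_{m-1},u_m\rangle$ with $v_1=v_n=u_1=u_m=v_*$, I would invoke Definition \ref{inducedmapdef} to pick $\omega_i$ for the $i$-th edge of $\mathcal{L}_1$ and $\omega'_j$ for the $j$-th edge of $\mathcal{L}_2$, with $\omega_n=\omega_1$ and $\omega'_m=\omega'_1$ to close the loops. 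A key observation is that both $\omega_1$ and $\omega'_1$ are primes dividing $v_*$, so they are vertices of the single simplex $\eta(v_*)$ in $\mathcal{D}(X)$, but they need not coincide.

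Next I would compute $\eta_*(\mathcal{L}_1\cdot\mathcal{L}_2)$ using the freedom in Definition \ref{inducedmapdef} to match the earlier choices on each half: for the first $n-1$ edges take $\omega''_i=\omega_i$, for the remaining $m-1$ edges take $\omega''_{n-1+j}=\omega'_j$, and for the closing endpoint return to $\omega_1$. This produces
$$\eta_*(\mathcal{L}_1\cdot\mathcal{L}_2)=\langle\omega_1,\omega_2\rangle\ldots\langle\omega_{n-1},\omega'_1\rangle\langle\omega'_1,\omega'_2\rangle\ldots\langle\omega'_{m-1},\omega_1\rangle.$$
Meanwhile I would form the product $\eta_*(\mathcal{L}_1)\cdot\eta_*(\mathcal{L}_2)$ by inserting the connecting edges $\langle\omega_1,\omega'_1\rangle$ at the junction and $\langle\omega'_1,\omega_1\rangle$ at the closing endpoint, obtaining
$$\eta_*(\mathcal{L}_1)\cdot\eta_*(\mathcal{L}_2)=\langle\omega_1,\omega_2\rangle\ldots\langle\omega_{n-1},\omega_1\rangle\langle\omega_1,\omega'_1\rangle\langle\omega'_1,\omega'_2\rangle\ldots\langle\omega'_{m-1},\omega'_1\rangle\langle\omega'_1,\omega_1\rangle.$$
These insertions are genuine edges of $\mathcal{D}(X)$ precisely because $\omega_1$ and $\omega'_1$ both divide $v_*$.

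The two displayed expressions agree except at two local discrepancies: the midpoint pair $\langle\omega_{n-1},\omega_1\rangle\langle\omega_1,\omega'_1\rangle$ and the terminal pair $\langle\omega'_{m-1},\omega'_1\rangle\langle\omega'_1,\omega_1\rangle$ in the second path correspond to the single edges $\langle\omega_{n-1},\omega'_1\rangle$ and $\langle\omega'_{m-1},\omega_1\rangle$ in the first. In each case the three primes involved all divide $v_*$, so they span a $2$-simplex of $\mathcal{D}(X)$ sitting inside $\eta(v_*)$, and by the definition of simple equivalence each consecutive pair of edges collapses to the single edge. Two applications of this observation yield $\eta_*(\mathcal{L}_1)\cdot\eta_*(\mathcal{L}_2)\sim\eta_*(\mathcal{L}_1\cdot\mathcal{L}_2)$, which is the desired homomorphism identity. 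The one delicate point will be the basepoint mismatch between $\omega_1$ and $\omega'_1$, but since both lie in the single simplex $\eta(v_*)$, the required connecting edges and $2$-simplices are always available, and this is precisely what makes the argument go through.
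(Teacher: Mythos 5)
Your argument is correct, but it takes a genuinely different and in fact more careful route than the paper. The paper's proof simply writes the concatenation of two loops as a piecewise-defined continuous function on $[0,1]$ and asserts that $\eta_*$ distributes over the two pieces; since $\eta_*$ was only ever defined combinatorially on edge-paths (via choices of primes $\omega_i$), that one-line computation glosses over both the choice-dependence of $\eta_*$ and the fact that $\eta_*(\mathcal{L}_1)$ and $\eta_*(\mathcal{L}_2)$ need not share a basepoint. You instead work entirely within the edge-path formalism: you use the freedom in Definition \ref{inducedmapdef} to make the choices on $\mathcal{L}_1\cdot\mathcal{L}_2$ agree with those already made on the two factors, you correctly observe that $\omega_1$ and $\omega'_1$ are possibly distinct primes that both divide the common basepoint $v_*$ and hence are vertices of the single simplex $\eta(v_*)$, and you reconcile the two resulting edge-paths by two simple equivalences supported on $2$-simplices inside $\eta(v_*)$. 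What this buys is a proof consistent with how homotopy of edge-loops is defined and used everywhere else in the paper, and one that explicitly resolves the basepoint mismatch the paper never mentions; the paper's version is shorter only at the cost of pretending $\eta_*$ is induced pointwise by a continuous map. The one thing worth adding to your write-up is a remark that the insertion of $\langle\omega_1,\omega'_1\rangle$ is the standard change-of-basepoint conjugation and that the resulting map on $\pi_1$ is independent of this choice precisely because $\eta(v_*)$ is a simplex and hence simply connected.
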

\begin{proof}
Let $\mathcal{L}_0$ and $\mathcal{L}_1$ be paths in $\mathcal{G}(X)$. Recall that the operation of the fundamental group is concatenation of paths, and that paths are continuous functions from $[0,1]$ to the topological space; that is, 
\begin{equation*}
\mathcal{L}_0\circ \mathcal{L}_1 = \left\{\begin{array}{lr}
       \mathcal{L}_0(2t) & : t\in [0,\frac{1}{2}]\\
       \mathcal{L}_1(2t-1) & : t \in [\frac{1}{2},1]\end{array}
   \right. .
   \end{equation*}
   Now 
   \begin{equation*}
   \eta_*(\mathcal{L}_0\circ \mathcal{L}_1)=\left\{\begin{array}{lr}
       \eta_*(\mathcal{L}_0(2t)) & : t\in [0,\frac{1}{2}]\\
       \eta_*(\mathcal{L}_1(2t-1)) & : t \in [\frac{1}{2},1]\end{array}\right. = \eta_*(\mathcal{L}_0)\circ\eta_*(\mathcal{L}_1),
       \end{equation*}
       so $\eta_*$ is a group homomorphism between $\pi_1(\mathcal{G}(X))$ and $\pi_1(\mathcal{D}(X))$. 
\end{proof}

\begin{theorem}\label{thmsurjection}
The map $\eta_*$, as a homomorphism, is a surjection. 
\end{theorem}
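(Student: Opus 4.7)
The plan is to exhibit, for each homotopy class in $\pi_1(\mathcal{D}(X))$, an explicit preimage under $\eta_*$ produced by the inverse construction $\eta_*^{-1}$ of Definition \ref{inverseinducedmap}. The heavy lifting has already been done in Theorem \ref{inverse2}, which establishes that $\eta_* \circ \eta_*^{-1}$ sends an edge-loop in $\mathcal{D}(X)$ to a homotopic edge-loop; so the real content of the current theorem is just to repackage Theorem \ref{inverse2} as a statement about the induced group homomorphism.

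I would proceed as follows. Fix a basepoint $v_0$ of $\mathcal{G}(X)$ and a corresponding basepoint $p_0$ of $\mathcal{D}(X)$ with $p_0 \mid v_0$, chosen so that $\eta_*$ sends loops based at $v_0$ to loops based at $p_0$. Let $[\mathcal{L}] \in \pi_1(\mathcal{D}(X))$ be arbitrary. By the earlier theorem that every loop in a simplicial complex is homotopic to an edge-loop, we may take $\mathcal{L} = \langle p_0, p_2\rangle \cdots \langle p_{n-1}, p_0\rangle$ to be an edge-loop. Apply Definition \ref{inverseinducedmap} to build $\eta_*^{-1}(\mathcal{L})$; using the freedom in that definition to choose $\sigma_1$ and $\sigma_n$ to be simplices containing $v_0$ (which is possible because $p_0 \mid v_0$), we may take $\alpha_1 = \alpha_n = v_0$. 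The result is an edge-loop $\mathcal{M} := \eta_*^{-1}(\mathcal{L})$ in $\mathcal{G}(X)$ based at $v_0$, representing an element $[\mathcal{M}] \in \pi_1(\mathcal{G}(X))$.

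Finally, invoke Theorem \ref{inverse2} to conclude that $\eta_*(\mathcal{M}) = \eta_*(\eta_*^{-1}(\mathcal{L}))$ is homotopic to $\mathcal{L}$, so $\eta_*([\mathcal{M}]) = [\mathcal{L}]$, and surjectivity follows. The main obstacle, such as it is, is purely bookkeeping around basepoints: one must check that the inverse construction can always be arranged to land among loops at the chosen basepoint. This is immediate from the latitude Definition \ref{inverseinducedmap} offers in choosing the $\alpha_i$, but it is worth stating explicitly so that $[\mathcal{M}]$ and $[\mathcal{L}]$ really live in the fundamental groups based at $v_0$ and $p_0$ respectively.
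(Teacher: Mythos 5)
Your proposal is correct and follows essentially the same route as the paper: apply the inverse construction $\eta_*^{-1}$ of Definition \ref{inverseinducedmap} to an arbitrary edge-loop in $\mathcal{D}(X)$ and invoke Theorem \ref{inverse2} to see that $\eta_*(\eta_*^{-1}(\mathcal{L}))$ is homotopic to $\mathcal{L}$. Your added attention to arranging the basepoints ($\alpha_1 = \alpha_n = v_0$ with $p_0 \mid v_0$) is a small but worthwhile refinement that the paper leaves implicit.
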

\begin{proof}
Let $\mathcal{L}$ be an edge-loop in $\mathcal{D}(X)$. We will show that there exists an edge-loop in $\mathcal{G}(X)$ that maps to an edge-loop homotopic to $\mathcal{L}$ under $\eta_*$. Consider the edge-loop $\eta_*^{-1}(\mathcal{L})$. Based on Definition \ref{inverseinducedmap}, this loop can be constructed no matter the choice of $\mathcal{L}$. By Theorem \ref{inverse2}, $\eta_*(\eta_*^{-1}(\mathcal{L}))$ is homotopic to $\mathcal{L}$. Thus $\eta_*$ is a surjection. 
\end{proof}

\begin{theorem}\label{inversewelldefined}
The inverse map $\eta_*^{-1}$ is a well-defined map on edge-paths. 
\end{theorem}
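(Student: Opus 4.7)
The plan is to mirror closely the structure of the proof that $\eta_*$ is well-defined, replacing the role of the shared prime divisor $\omega_{123}$ with a shared multiple $v_{123} \in X$. First I would reduce to the case in which $\mathcal{L}_0$ and $\mathcal{L}_1$ differ by a single simple equivalence, since the general case then follows by transitivity along a chain of such equivalences. So suppose $\mathcal{L}_0$ contains the segment $\mathcal{P}_0 = \langle p_a, p_1\rangle\langle p_1, p_2\rangle\langle p_2, p_b\rangle$ and $\mathcal{L}_1$ contains $\mathcal{P}_1 = \langle p_a, p_1\rangle\langle p_1, p_3\rangle\langle p_3, p_2\rangle\langle p_2, p_b\rangle$, with $p_1, p_2, p_3$ sharing a common simplex in $\mathcal{D}(X)$. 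By the definition of $\mathcal{D}(X)$, there exists some $v_{123} \in X$ divisible by each of $p_1, p_2, p_3$, and this vertex will play the role dual to that of $\omega_{123}$ in the earlier proof.

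Next I would apply Definition \ref{inverseinducedmap} to both segments. For the simplices $\sigma_i$ associated to the primes common to both paths, I would make the same choices; for the extra prime $p_3$, I would pick $\sigma_3$ to be any simplex of $\mathcal{G}(X)$ containing $v_{123}$. This produces $\eta_*^{-1}(\mathcal{P}_0) = \langle \alpha_{a1}, \alpha_{12}\rangle\langle \alpha_{12}, \alpha_{2b}\rangle$ and $\eta_*^{-1}(\mathcal{P}_1) = \langle \alpha_{a1}, \alpha_{13}\rangle\langle \alpha_{13}, \alpha_{23}\rangle\langle \alpha_{23}, \alpha_{2b}\rangle$, where each $\alpha_{ij}$ is a vertex of $\mathcal{G}(X)$ divisible by both $p_i$ and $p_j$.

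The core computation is then to build a five-step chain of simple equivalences from $\eta_*^{-1}(\mathcal{P}_1)$ to $\eta_*^{-1}(\mathcal{P}_0)$, exactly dual to the chain in the proof for $\eta_*$. The key simplices are $\{\alpha_{13}, v_{123}, \alpha_{23}\}$ (all vertices divisible by $p_3$), $\{\alpha_{13}, \alpha_{12}, v_{123}\}$ (divisible by $p_1$), and $\{\alpha_{12}, v_{123}, \alpha_{23}\}$ (divisible by $p_2$), together with $\{\alpha_{a1}, \alpha_{13}, \alpha_{12}\}$ (divisible by $p_1$) and $\{\alpha_{12}, \alpha_{23}, \alpha_{2b}\}$ (divisible by $p_2$). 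Inserting $v_{123}$ between $\alpha_{13}$ and $\alpha_{23}$, routing through $\alpha_{12}$, removing $v_{123}$, and finally collapsing $\alpha_{13}$ and $\alpha_{23}$ yields the five successive simple equivalences that complete the argument.

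The main obstacle I expect is bookkeeping around the auxiliary choices in Definition \ref{inverseinducedmap}: unlike $\eta$, which is canonically defined, $\eta_*^{-1}$ depends on a choice of $\sigma_i$ for each prime $p_i$, so I must argue carefully that compatible choices can be made on the shared portion of $\mathcal{L}_0$ and $\mathcal{L}_1$, and that any other independent choice only alters the resulting $\alpha$-vertex within a simply connected simplex, hence yields a homotopic path (as already noted after Definition \ref{inverseinducedmap}). The degenerate cases where some of $p_1, p_2, p_3$ coincide should be handled separately by shorter chains of simple equivalences, paralleling the analogous case analysis for $\eta_*$.
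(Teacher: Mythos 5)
Your proposal is correct and follows essentially the same route as the paper's proof: reduce to a single simple equivalence, take a common vertex $\alpha_{123}$ (your $v_{123}$) divisible by $p_1,p_2,p_3$, and run the same five-step chain of simple equivalences through the same five auxiliary simplices. The extra care you note about consistency of the choices of $\sigma_i$ and the degenerate cases is a reasonable refinement, but the underlying argument is identical.
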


\begin{proof}
Let $\mathcal{L}_0$ and $\mathcal{L}_1$ be two homotopic paths in $\mathcal{D}(X)$. For $\mathcal{L}_0$ and $\mathcal{L}_1$ to be homotopic in $\mathcal{D}(X)$ means that there is a sequence of simple equivalences from $\mathcal{L}_0$ to $\mathcal{L}_1$.  Assume $\mathcal{L}_0$ and $\mathcal{L}_1$ differ by one simple equivalence. Let $\mathcal{L}_0$  contain the edge-path segment $\mathcal{P}_0=\langle p_a,p_1\rangle\langle p_1,p_2\rangle\langle p_2,p_b\rangle$ and $\mathcal{L}_1$ contain the edge-path segment $\mathcal{P}_1=\langle p_a,p_1\rangle\langle p_1,p_3\rangle\langle p_3,p_2\rangle\langle p_2,p_b\rangle$, where $p_1$, $p_2$, and $p_3$ are not necessarily distinct. For $\mathcal{L}_0$ and $\mathcal{L}_1$ to differ by one simple equivalence, there must exist a simplex with $p_1$, $p_2$, and $p_3$ as vertices. Because this simplex exists, $p_1$, $p_2$, and $p_3$ must divide a vertex in $\mathcal{G}(X)$ which we will call $\alpha_{123}$. Now we will apply Definition \ref{inverseinducedmap} to $\mathcal{L}_0$ and $\mathcal{L}_1$ to find $\eta_*^{-1}(\mathcal{L}_0)$ and $\eta_*^{-1}(\mathcal{L}_1)$. For every edge in the two loops, we know that there exists a vertex in $\mathcal{G}(X)$ which is divisible by the primes on either side of the edge. Let $\alpha_{a1}$ be divisible by $p_a$ and $p_1$, $\alpha_{12}$ be divisible by $p_1$ and $p_2$, $\alpha_{13}$ be divisible by $p_1$ and $p_3$, $\alpha_{23}$ be divisible by $p_2$ and $p_3$, and $\alpha_{2b}$ be divisible by $p_2$ and $p_b$. Let $\sigma_a$ be the simplex in $\mathcal{G}(X)$ in which every vertex is divisible by $p_a$; define $\sigma_1, \sigma_2, \sigma_3$, and $\sigma_b$ analogously.  We see that $\eta_*^{-1}(\mathcal{L}_0)$ passes through the simplices $\sigma_a$, $\sigma_1$, $\sigma_2$, and $\sigma_b$. Because simplices are simply-connected any path through these simplices can be continuously deformed to the path $\langle \alpha_{a1},\alpha_{12}\rangle\langle \alpha_{12},\alpha_{2b}\rangle$ in $\mathcal{G}(X)$. Likewise $\eta_*^{-1}(\mathcal{L}_1)$ passes through the simplices $\sigma_a$, $\sigma_1$, $\sigma_3$, $\sigma_2$, and $\sigma_b$, and this segment of $\mathcal{L}_1$ can be continuously deformed to the path $\langle \alpha_{a1},\alpha_{13}\rangle\langle \alpha_{13},\alpha_{23}\rangle\langle \alpha_{23},\alpha_{2b}\rangle$. That is, $\eta_*^{-1}(\mathcal{P}_0)=\langle \alpha_{a1},\alpha_{12}\rangle\langle \alpha_{12},\alpha_{2b}\rangle$ and $\eta_*^{-1}(\mathcal{P}_1)=\langle \alpha_{a1},\alpha_{13}\rangle\langle \alpha_{13},\alpha_{23}\rangle\langle \alpha_{23},\alpha_{2b}\rangle$.

Our goal now is to show that because $\mathcal{L}_0$ and $\mathcal{L}_1$ are homotopic, the inverse images of $\mathcal{L}_0$ and $\mathcal{L}_1$ under $\eta_*$ are homotopic. Since $\mathcal{L}_0$ and $\mathcal{L}_1$ agree everywhere except for $\mathcal{P}_0$ and $\mathcal{P}_1$, it suffices to show that the inverse images of these two paths under $\eta_*$ are homotopic. Now because $\alpha_{a1}$, $\alpha_{12}$, and $\alpha_{13}$ are all divisible by $p_1$, these vertices form a simplex in $\mathcal{G}(X)$. Likewise since $\alpha_{12}$, $\alpha_{23}$, and $\alpha_{2b}$ are all divisible by $p_2$, these vertices also form a simplex. In the same way because $\alpha_{13}$, $\alpha_{12}$, and $\alpha_{123}$ are all divisible by $p_1$, $\alpha_{23}$, $\alpha_{12}$, and $\alpha_{123}$ are all divisible by $p_2$, and $\alpha_{13}$, $\alpha_{23}$, and $\alpha_{123}$ are all divisible by $p_3$, these three groups of vertices form three simplices in $\mathcal{G}(X)$. Because we have all of these simplices, we see that 
\begin{align*}
\langle\alpha_{a1},\alpha_{13}\rangle\langle\alpha_{13},\alpha_{23}\rangle\langle\alpha_{23},\alpha_{2b}\rangle &\sim\langle\alpha_{a1},\alpha_{13}\rangle\langle \alpha_{13},\alpha_{123}\rangle\langle \alpha_{123},\alpha_{23}\rangle\langle\alpha_{23},\alpha_{2b}\rangle\\
&\sim\langle\alpha_{a1},\alpha_{13}\rangle\langle\alpha_{13},\alpha_{12}\rangle\langle \alpha_{12},\alpha_{123}\rangle\langle \alpha_{123},\alpha_{23}\rangle\langle\alpha_{23},\alpha_{2b}\rangle\\
&\sim\langle\alpha_{a1},\alpha_{13}\rangle\langle\alpha_{13},\alpha_{12}\rangle\langle \alpha_{12},\alpha_{23}\rangle\langle\alpha_{23},\alpha_{2b}\rangle\\
&\sim\langle\alpha_{a1},\alpha_{12}\rangle\langle \alpha_{12},\alpha_{23}\rangle\langle\alpha_{23},\alpha_{2b}\rangle\\
&\sim\langle\alpha_{a1},\alpha_{12}\rangle\langle \alpha_{12},\alpha_{2b}\rangle.
\end{align*}
Thus $\eta_*^{-1}(\mathcal{L}_0)$ is homotopic to $\eta_*^{-1}(\mathcal{L}_1)$. 

Likewise if $\mathcal{L}_0$ and $\mathcal{L}_1$ differ by a finite number of successive simple equivalences, we can find a chain of intermediary paths from $\mathcal{L}_0$ and $\mathcal{L}_1$, each differing by a simple equivalence, with each path in the chain homotopic to the last. Since each image in the chain is homotopic to the image of the last, we know that $\eta_*^{-1}(\mathcal{L}_0)$ and $\eta_*^{-1}(\mathcal{L}_1)$ are homotopic by transitivity. Therefore the inverse map $\eta_*^{-1}$ is well-defined. 
\end{proof}

\begin{corollary}\label{onetoone}
The map $\eta_*$ is an injection. 
\end{corollary}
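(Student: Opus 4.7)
The plan is to deduce injectivity directly from the machinery already assembled, since Theorems \ref{inverse}, \ref{inverse2}, and \ref{inversewelldefined} essentially say that $\eta_*^{-1}$ is a genuine two-sided inverse at the level of homotopy classes. Concretely, I would take two edge-loops $\mathcal{L}_0$ and $\mathcal{L}_1$ in $\mathcal{G}(X)$ (sharing a base point) and assume that $\eta_*(\mathcal{L}_0)$ is homotopic to $\eta_*(\mathcal{L}_1)$ in $\mathcal{D}(X)$, with the goal of concluding that $\mathcal{L}_0$ is homotopic to $\mathcal{L}_1$ in $\mathcal{G}(X)$.

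The first step is to apply $\eta_*^{-1}$ to both sides. By Theorem \ref{inversewelldefined}, $\eta_*^{-1}$ sends homotopic edge-loops in $\mathcal{D}(X)$ to homotopic edge-loops in $\mathcal{G}(X)$, so we obtain
\[
\eta_*^{-1}(\eta_*(\mathcal{L}_0)) \sim \eta_*^{-1}(\eta_*(\mathcal{L}_1))
\]
in $\pi_1(\mathcal{G}(X))$. The second step is to invoke Theorem \ref{inverse}, which says that for any edge-loop $\mathcal{L}$ in $\mathcal{G}(X)$, the loop $\eta_*^{-1}(\eta_*(\mathcal{L}))$ is homotopic to $\mathcal{L}$ itself. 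Applying this to both $\mathcal{L}_0$ and $\mathcal{L}_1$ and chaining the three homotopies via transitivity yields $\mathcal{L}_0 \sim \mathcal{L}_1$, which is exactly injectivity.

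Equivalently, since we already know from Theorem \ref{homomorphism} that $\eta_*$ is a homomorphism, one can argue via the kernel: suppose $\eta_*(\mathcal{L})$ is trivial in $\pi_1(\mathcal{D}(X))$. Then $\eta_*^{-1}(\eta_*(\mathcal{L}))$ is trivial in $\pi_1(\mathcal{G}(X))$ by well-definedness of $\eta_*^{-1}$, and by Theorem \ref{inverse} this loop is homotopic to $\mathcal{L}$, so $\mathcal{L}$ itself is trivial. Thus $\ker \eta_* = \{e\}$, and injectivity follows.

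I do not expect any substantive obstacle, because all the heavy lifting has been done: the well-definedness of $\eta_*^{-1}$ (Theorem \ref{inversewelldefined}) plays the role usually hidden inside the statement ``the inverse respects homotopy,'' and Theorem \ref{inverse} provides the ``left inverse'' identity $\eta_*^{-1}\circ\eta_* \sim \mathrm{id}$ on homotopy classes of edge-loops in $\mathcal{G}(X)$. The only care needed is a brief verification that we are comparing loops based at the same point; this is automatic because $\eta_*$ and $\eta_*^{-1}$ are defined to preserve the chosen base point (e.g., $\alpha_1 = v_1$ in Definition \ref{inverseinducedmap} and the analogous choice for $\eta_*$). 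Combined with Theorem \ref{thmsurjection}, this corollary upgrades the homomorphism $\eta_*$ to an isomorphism, completing the proof of the main theorem stated in the introduction.
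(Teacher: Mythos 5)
Your proposal is correct and matches the paper's own argument: the paper proves injectivity by showing $\ker(\eta_*)$ is trivial, using exactly the combination of Theorem \ref{inversewelldefined} (well-definedness of $\eta_*^{-1}$ on homotopy classes) and Theorem \ref{inverse} ($\eta_*^{-1}\circ\eta_*\sim\mathrm{id}$), which is precisely your second, kernel-based argument. Your first, two-loop version is only a cosmetic restatement of the same idea, so there is nothing substantively different to compare.
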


\begin{proof}

It suffices to show that the kernel of $\eta_*$ is trivial by Theorem 10.2.9 in \cite{gallian2009}. We will show that if a loop in $\mathcal{D}(X)$ is homotopic to the trivial loop, then the inverse image of that loop under $\eta_*$ is homotopic to the trivial loop in $\mathcal{G}(X)$. 

Let $\mathcal{L}\in\ker(\eta_*)$ with base point $b$. We will show that $\mathcal{L}$ is homotopic to the trivial loop.  Let $\omega$ be the basepoint of $\eta_*(\mathcal{L})$.  Because $\mathcal{L}\in\ker(\eta_*)$, we know that $\eta_*(\mathcal{L})$ is homotopic to $\langle \omega, \omega \rangle$ in $\mathcal{D}(X)$.  By Theorem \ref{inversewelldefined}, $\eta_*^{-1}$ is well-defined, and therefore $\eta_*^{-1}(\eta_*(\mathcal{L}))$ is homotopic to $\eta_{*}^{-1}(\langle \omega, \omega \rangle)$.  To preserve basepoints, we take $\eta_{*}^{-1}(\langle \omega, \omega \rangle) = \langle b,b \rangle$.  Applying Theorem \ref{inverse}, we see that $\mathcal{L}$ is homotopic to $\eta_{*}^{-1}(\eta_*(\mathcal{L}))$, and we conclude that $\mathcal{L}$ is homotopic to $\langle b, b \rangle$.  As $b$ is a single point, the loop $\langle b , b \rangle$ is trivial.  Since $\mathcal{L} \in \ker(\eta_*)$ was arbitrary, we have that $\eta_*$ is an injection. 
\end{proof}

\begin{corollary}\label{bijection}
The map $\eta_*$ is an isomorphism from $\pi_1(\mathcal{G}(X))$ to $\pi_1(\mathcal{D}(X))$. 
\end{corollary}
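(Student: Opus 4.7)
The plan is to assemble this final corollary directly from the three preceding results, since the heavy lifting has already been done. First I would observe that to establish an isomorphism of groups, it suffices to verify three properties: that $\eta_*$ is a well-defined homomorphism, that it is injective, and that it is surjective. Each of these has already been proved, so the proof here is essentially a bookkeeping argument that cites the appropriate results.

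More concretely, I would proceed as follows. I would begin by invoking Theorem \ref{homomorphism} to conclude that $\eta_*\colon\pi_1(\mathcal{G}(X))\to\pi_1(\mathcal{D}(X))$ is a group homomorphism. Next, I would cite Theorem \ref{thmsurjection} to conclude that $\eta_*$ is surjective; recall that this used the construction of $\eta_*^{-1}$ together with Theorem \ref{inverse2}, which says that $\eta_*(\eta_*^{-1}(\mathcal{L}))$ is homotopic to $\mathcal{L}$ for any edge-loop $\mathcal{L}$ in $\mathcal{D}(X)$. Finally, I would cite Corollary \ref{onetoone} to conclude that $\eta_*$ is injective; this in turn rested on showing the kernel of $\eta_*$ is trivial, using the well-definedness of $\eta_*^{-1}$ established in Theorem \ref{inversewelldefined} along with Theorem \ref{inverse}.

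Putting these together, $\eta_*$ is a bijective group homomorphism from $\pi_1(\mathcal{G}(X))$ to $\pi_1(\mathcal{D}(X))$, which is by definition a group isomorphism. There is no real obstacle at this stage since all three ingredients have been carefully proved earlier in Section \ref{inducedmap}; the only subtlety worth flagging is the choice of compatible basepoints in both simplicial complexes, which is implicit in the definitions of $\eta_*$ and $\eta_*^{-1}$ and was handled in the proof of Corollary \ref{onetoone} by choosing $\eta_*^{-1}(\langle \omega,\omega\rangle) = \langle b,b\rangle$. With that convention in place, the proof is a single short paragraph combining the cited theorems.
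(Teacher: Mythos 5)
Your proposal is correct and follows exactly the same route as the paper's proof: cite Theorem \ref{homomorphism} for the homomorphism property, Theorem \ref{thmsurjection} for surjectivity, and Corollary \ref{onetoone} for injectivity, then conclude. The remarks about basepoints are a reasonable extra precaution but nothing further is needed.
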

\begin{proof}
By Theorem \ref{homomorphism}, The map $\eta_*$ is a group homomorphism from $\pi_1(\mathcal{G}(X))$ to $\pi_1(\mathcal{D}(X))$.  Combining Theorem \ref{thmsurjection} and Corollary \ref{onetoone}, we have that $\eta_*$ is both one-to-one and onto.  Thus $\eta_*$ is an isomorphism between $\pi_1(\mathcal{G}(X))$ and $\pi_1(\mathcal{D}(X))$. 
\end{proof}

Thus for any set of integers, the fundamental groups of the common and the prime divisor simplicial complexes are isomorphic. As a corollary to this, we have the following result. 

\begin{corollary}
The first homology groups of $\mathcal{G}(X)$ and $\mathcal{D}(X)$ are isomorphic. 
\end{corollary}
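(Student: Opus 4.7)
The plan is to invoke the Hurewicz theorem, which states that for a path-connected topological space $Y$, the first singular homology group $H_1(Y)$ is isomorphic to the abelianization of the fundamental group, i.e., $H_1(Y) \cong \pi_1(Y)/[\pi_1(Y),\pi_1(Y)]$. Since we have been assuming throughout that our simplicial complexes are connected (path-connectedness follows from connectedness for simplicial complexes), the hypotheses are satisfied for both $\mathcal{G}(X)$ and $\mathcal{D}(X)$.

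First I would cite the Hurewicz theorem (referring to \cite{hatcher2002algebraic} or an analogous source already available in the bibliography) to obtain the two isomorphisms
\[
H_1(\mathcal{G}(X)) \cong \pi_1(\mathcal{G}(X))/[\pi_1(\mathcal{G}(X)),\pi_1(\mathcal{G}(X))]
\]
and
\[
H_1(\mathcal{D}(X)) \cong \pi_1(\mathcal{D}(X))/[\pi_1(\mathcal{D}(X)),\pi_1(\mathcal{D}(X))].
\]
Next I would appeal to Corollary \ref{bijection}, which gives an isomorphism $\eta_* : \pi_1(\mathcal{G}(X)) \to \pi_1(\mathcal{D}(X))$. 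A standard and elementary fact about group isomorphisms is that they send commutator subgroups to commutator subgroups and therefore descend to isomorphisms on abelianizations; I would state this briefly and apply it to $\eta_*$ to conclude that the two quotient groups above are isomorphic. Composing with the Hurewicz isomorphisms in the obvious way then yields $H_1(\mathcal{G}(X)) \cong H_1(\mathcal{D}(X))$.

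There is no serious obstacle here; the entire argument is a formal consequence of Corollary \ref{bijection} together with the Hurewicz theorem. The only small care needed is to note that both complexes are path-connected so that Hurewicz applies, and to remark that an isomorphism of groups automatically induces an isomorphism of their abelianizations. If the author prefers to avoid quoting Hurewicz directly, an equivalent approach is to observe that for a CW complex (in particular a simplicial complex) the first homology with integer coefficients may be computed as the abelianization of the edge-path group described in Section \ref{inducedmap}, and then to argue in parallel with Corollary \ref{bijection} that abelianizing the simple-equivalence relation on edge-loops yields the same group on both sides under $\eta_*$.
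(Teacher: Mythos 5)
Your proposal is correct and follows essentially the same route as the paper, which simply observes that the first homology group is the abelianization of the fundamental group and applies Corollary \ref{bijection}. You spell out the Hurewicz step and the fact that a group isomorphism descends to abelianizations in more detail, but the argument is the same.
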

\begin{proof}
As the first homology group is simply the abelianization of the fundamental group, the result holds. 
\end{proof}

\section{Applications}\label{applications}
 
We now apply the more general results of the previous sections to finite group theory.  In all that follows, we assume that $G$ is a finite group, writing $\Irr(G)$ for the irreducible characters of $G$ and $\cd(G) = \{\chi(1) \, \, | \, \, \chi \in \Irr(G)\}$.  We abbreviate $\Gamma(G)$ for the common divisor graph of $\cd(G)\setminus \{1\}$ and $\Delta(G)$ for the prime divisor graph of $\cd(G)$.  If $x$ is an integer, recall that $\pi(x)$ denotes the set of prime divisors of $x$.
 
As alluded to in the introduction, the research that begins with the graph $\Gamma(G)$ or $\Delta(G)$ and derives group theoretic properties about the corresponding group has recently been expanded.  It seems that much is to be gained by studying the more rich structure of the \emph{common divisor simplicial complex}, denoted $\mathcal{G}(G)$, or the \emph{prime divisor simplicial complex}, hereafter $\mathcal{D}(G)$, of the set $\cd(G)$.  This suspicion that is mentioned in \cite{lewis2008overview} is confirmed, at least for the common divisor simplicial complex, in \cite{jensen2015character}.  The author of \cite{jensen2015character} chooses only to work with $\mathcal{G}(G)$, and all results are stated for $\mathcal{G}(G)$ only.  Combining the results of \cite{jensen2015character} with those of this paper, it is now possible to restate those results in terms of $\mathcal{D}(G)$.
 
 The first result from \cite{jensen2015character} that carries over is the following.
 \begin{lemma}
 Suppose $G$ is a finite group and $K \lhd G$ is maximal so that $G/K$ is nonabelian.  If $G/K$ is a $p$-group for some prime $p$ then $\mathcal{D}(G)$ is connected and $\pi_1(\mathcal{D}(G))$ is trivial.  In particular, if $G$ is nonabelian and nilpotent, then $\pi_1(\mathcal{D}(G))$ is trivial.
 \end{lemma}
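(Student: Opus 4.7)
The plan is to reduce the statement about $\mathcal{D}(G)$ to the analogous statement about $\mathcal{G}(G)$, which is the corresponding lemma proved in \cite{jensen2015character}, and then transport the conclusion across the correspondence we have built in Sections \ref{correspondence} and \ref{inducedmap}. Explicitly, the analogous result in \cite{jensen2015character} states that under the same hypothesis on $G$ and $K$, the simplicial complex $\mathcal{G}(G)$ is connected and $\pi_1(\mathcal{G}(G))$ is trivial. So the first step is to quote that theorem.

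Second, I would apply Theorem \ref{connectedness} to the set $X = \cd(G)$: since $\mathcal{G}(G)$ is connected, $\mathcal{D}(G)$ is connected as well. Having verified that connectedness hypothesis, we are now in a position to apply Corollary \ref{bijection}, which gives $\pi_1(\mathcal{G}(G)) \cong \pi_1(\mathcal{D}(G))$ via the induced map $\eta_*$. Since the left-hand side is trivial by \cite{jensen2015character}, we conclude that $\pi_1(\mathcal{D}(G))$ is trivial, which completes the first assertion.

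For the \emph{in particular} clause, the strategy is to show that the first part of the lemma applies to any nonabelian nilpotent $G$. Let $K \lhd G$ be maximal with $G/K$ nonabelian; since $G$ is nilpotent, so is $G/K$, and by maximality every proper quotient of $G/K$ is abelian. If $G/K$ involved two distinct primes, then writing $G/K$ as a direct product of its Sylow subgroups would exhibit a nontrivial direct factor decomposition $G/K \cong N_1 \times N_2$ with each $N_i$ a proper quotient of $G/K$, hence abelian; but then $G/K$ itself would be abelian, a contradiction. Therefore $G/K$ is a $p$-group, and the previous paragraph applies.

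The main obstacle is not computational but expository: the proof is almost entirely bookkeeping between the two simplicial complexes. The only content is (i) making sure the hypotheses of Theorem \ref{connectedness} and Corollary \ref{bijection} are met (i.e., that $\mathcal{G}(G)$ is connected under our assumption, which is precisely what \cite{jensen2015character} provides), and (ii) the small group-theoretic argument that nilpotent plus ``every proper quotient is abelian'' forces a $p$-group. Everything else is a straightforward appeal to the machinery established earlier in the paper.
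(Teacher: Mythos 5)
Your proposal is correct and follows essentially the same route as the paper: quote the corresponding result for $\mathcal{G}(G)$ from \cite{jensen2015character}, transfer connectedness via Theorem \ref{connectedness}, and then apply Corollary \ref{bijection} to conclude $\pi_1(\mathcal{D}(G))$ is trivial. Your extra group-theoretic argument for the \emph{in particular} clause is a fine addition, though the paper simply absorbs that case into the citation.
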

 \begin{proof}
 Lemma 3.2 of \cite{jensen2015character} concludes that under the hypotheses of this lemma, $\mathcal{G}(G)$ is connected and $\pi_1(\mathcal{G}(G))$ is trivial.  As $\mathcal{G}(G)$ is connected if and only if $\mathcal{D}(G)$ is connected by Theorem \ref{connectedness}, Theorem \ref{bijection} applies, yielding the result.  
 \end{proof}
 
 In order to restate the analogous theorem to the main result of \cite{jensen2015character}, we will introduce notation defined in \cite{benjamin1997coprimeness}.  In this work, the author states that a group $G$ satisfies property $P_k$ if every set of $k$ distinct elements of $\cd(G)$ is (setwise) relatively prime.  
 
 \begin{theorem}
 Suppose $G$ is a finite solvable group with $\mathcal{D}(G)$ connected.  Suppose that $k$ is the smallest positive integer such that $G$ satisfies property $P_k$, and suppose that $k \geq 3$.  Then the rank of $\pi_1(\mathcal{D}(G))$ is at most $k^2 - 3k + 1$.
 \end{theorem}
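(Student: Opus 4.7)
The proof plan is to reduce this rank bound on $\pi_1(\mathcal{D}(G))$ to the analogous rank bound on $\pi_1(\mathcal{G}(G))$ established in \cite{jensen2015character}, using the isomorphism $\eta_*$ from Corollary \ref{bijection}. In this sense the theorem should follow essentially as a corollary to the main results of \cite{jensen2015character} combined with the machinery developed in Sections \ref{correspondence} and \ref{inducedmap} of this paper.

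First, I would verify that the hypotheses of the analogous theorem from \cite{jensen2015character} are satisfied. Since $\mathcal{D}(G)$ is assumed to be connected, Theorem \ref{connectedness} gives that $\mathcal{G}(G)$ is also connected, so the isomorphism $\eta_*: \pi_1(\mathcal{G}(G)) \to \pi_1(\mathcal{D}(G))$ from Corollary \ref{bijection} is available. The hypotheses that $G$ is a finite solvable group and that $k \geq 3$ is the smallest positive integer for which $G$ satisfies property $P_k$ carry over verbatim, as these depend only on $G$ and $\cd(G)$, not on the choice of simplicial complex.

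Second, I would invoke the rank-bound theorem of \cite{jensen2015character} to conclude that the rank of $\pi_1(\mathcal{G}(G))$ is at most $k^2 - 3k + 1$. Since Corollary \ref{bijection} gives a group isomorphism $\pi_1(\mathcal{G}(G)) \cong \pi_1(\mathcal{D}(G))$, and rank is an invariant of the isomorphism class of a group, the same bound transfers immediately to $\pi_1(\mathcal{D}(G))$, completing the proof. The only real obstacle is a bookkeeping one: making sure the statement of the rank bound in \cite{jensen2015character} is phrased in a way that is preserved under group isomorphism (for instance, in terms of minimal number of generators of the free group rather than combinatorial features of $\mathcal{G}(G)$). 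Since rank of a fundamental group is a topological invariant and hence an isomorphism invariant, this presents no genuine difficulty, and the theorem then falls out as a direct translation through $\eta_*$.
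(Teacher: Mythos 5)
Your overall strategy matches the paper's: transfer connectedness via Theorem \ref{connectedness}, apply the rank bound from \cite{jensen2015character} to $\pi_1(\mathcal{G}(G))$, and then push the bound through the isomorphism $\eta_*$ of Corollary \ref{bijection}, noting that rank is an isomorphism invariant. That last transfer step is handled correctly and is not where any difficulty lies.

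The gap is in the middle step. You assume the theorem in \cite{jensen2015character} is already phrased in terms of the property $P_k$ and directly yields the bound $k^2 - 3k + 1$ for $\pi_1(\mathcal{G}(G))$. It is not: Theorem 3.7 of \cite{jensen2015character} is stated in terms of the \emph{dimension} $n$ of $\mathcal{G}(G)$ and gives the bound $n^2 + n - 1$. The one piece of actual work in the paper's proof is the translation between these two parameters: if $\mathcal{G}(G)$ has dimension $n$, then $\cd(G)$ contains a subset of size $n+1$ with nontrivial gcd but no such subset of size $n+2$, so the smallest $k$ for which $G$ satisfies $P_k$ is exactly $k = n+2$; substituting $n = k-2$ into $n^2 + n - 1$ produces $k^2 - 3k + 1$. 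You flagged a potential ``bookkeeping'' issue but located it in the wrong place (worrying about whether rank survives isomorphism, which is immediate), while the genuine bookkeeping --- the dimension-to-$k$ conversion --- is missing from your argument. Without it, the bound $k^2 - 3k + 1$ is not justified.
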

 \begin{proof}
 Theorem 3.7 of \cite{jensen2015character} states that if $G$ is a finite solvable group with $\mathcal{G}(G)$ connected of dimension $n$, then $\pi_1(\mathcal{G}(G))$ has rank at most $n^2 + n -1$.  If the dimension of $\mathcal{G}(G)$ is $n$, we conclude both that $\cd(G)$ has a subset $X$ satisfying $|X| = n+1$ and $\gcd(X) > 1$ and that no subset of $\cd(G)$ of size $n+2$ has a nontrivial common divisor.  We conclude that $k = n+2$, and plugging in $k-2$ for $n$ gives us that $\pi_1(\mathcal{G}(G))$ has rank at most $(k-2)^2 + (k-2) - 1 = k^2 - 3k + 1$.  By Theorem \ref{bijection}, the groups	 $\pi_1(\mathcal{G}(G))$ and $\pi_1(\mathcal{D}(G))$ are isomorphic, and therefore the bound $k^2 - 3k + 1$ applies to the rank of $\pi_1(\mathcal{D}(G))$ as well. 
\end{proof}
  
\section{Directions for Future Research}\label{research}

We know that the common and the prime divisor simplicial complex have isomorphic fundamental groups, but can this be extended? Could this be proven using only relationships between numbers, like the relationship between common and prime divisors, not simplicial complexes at all? If so, can we find other relationships between numbers that exhibit the same isomorphic fundamental group property? Another possible avenue for further research is extending this theory to higher homotopy and homology groups.

\section{Example of a Simplicial Complex}\label{example}

Given the set $X=\{22,33,65,91,210\}$, we form the common divisor simplicial complex $\mathcal{G}(X)$ as shown in Figure \ref{gx}. The set of prime divisors of $X$ is $P=\{2,3,5,7,11,13\}$, and from this set we form the prime divisor simplicial complex $\mathcal{D}(X)$ as shown in Figure \ref{dx}.

\begin{figure}[!ht]
\begin{minipage}[.5\textheight]{.5\textwidth}
\centering
\begin{tikzpicture}
\draw (0,0)--(-1,0.6)--(-1,-0.6)--(0,0)--(1,0.6)--(1,-0.6)--(0,0);
\filldraw[color = black, fill = black] (0,0) circle[radius = .03];
\filldraw[black] (0,0) node [label=$210$]{}
                 (-1,0.6) node [label=above:$22$]{}
				 (-1,-0.6) node [label=below:$33$]{}
                 (1,0.6) node [label=above:$65$]{}
				 (1,-0.6) node [label=below:$91$]{};
\end{tikzpicture}
\caption{$\mathcal{G}(X)$}\label{gx}
\end{minipage}%
\begin{minipage}[.5\textheight]{.5\textwidth}
\centering
\begin{tikzpicture}
\filldraw[fill=lightgray] (0,0)--(0,1)--(1,1)--(1,0)--(0,0);
\draw (0,1)--(-0.5,0.5)--(0,0)--(1,1)--(1.5,0.5)--(1,0);
\draw[dashed] (1,0)--(0,1);
\filldraw[black] (0,0) node [label=below:$3$]{}
                 (0,1) node [label=above:$2$]{}
                 (1,1) node [label=above:$5$]{}
                 (1,0) node [label=below:$7$]{}
                 (-0.5,0.5) node [label=left:$11$]{}
				 (1.5,0.5) node [label=right:$13$]{};
\end{tikzpicture}
\caption{$\mathcal{D}(X)$}\label{dx}
\end{minipage}
\end{figure}

\bibliographystyle{amsplain}
\bibliography{main}

\end{document}